\documentclass[reqno, 11pt, a4paper]{amsart} 

\usepackage[T5,T1]{fontenc}
\usepackage{mlmodern}

\usepackage[utf8]{inputenc}
\allowdisplaybreaks
\usepackage{amsfonts}
\usepackage{amsmath}
\usepackage{amssymb}
\usepackage{amsthm}
\usepackage[text={33pc,605pt},centering, margin=1.25in]{geometry}     

\usepackage{mathrsfs} 
\usepackage[dvipsnames]{xcolor}

\usepackage{bbm}
\usepackage{mathtools}

\usepackage{dsfont}
 
\usepackage[pagebackref=true,colorlinks=true, linkcolor=Blue, citecolor=Blue, pdfencoding=auto, psdextra]{hyperref}
\renewcommand*\backref[1]{\ifx#1\relax \else (Cited on #1) \fi}

\usepackage{appendix}
\usepackage{enumitem}
\usepackage{float}

\usepackage{tikz}
\usepackage{tikz-cd} 
\usetikzlibrary{arrows, arrows.meta}

\usepackage[nameinlink]{cleveref} 
\usepackage{scalerel}[2016/12/29]

\theoremstyle{plain}
\newtheorem{definition}{Definition}

\newtheorem{lemma}[definition]{Lemma}

\newtheorem{theorem}[definition]{Theorem}
\newtheorem{remark}[definition]{Remark}

\newtheorem{question}[definition]{Question}
\theoremstyle{definition}

\numberwithin{definition}{section}
\numberwithin{equation}{section}

\newcommand*{\R}{\mathbb{R}}
\newcommand*{\C}{\mathbb{C}}

\newcommand*{\Z}{\mathbb{Z}}

\newcommand*{\N}{\mathbb{N}}

\renewcommand*{\d}{\mathrm{d}}
\newcommand*{\e}{\mathrm{e}}

\makeatletter
\newcommand{\subalign}[1]{%
  \vcenter{%
    \Let@ \restore@math@cr \default@tag
    \baselineskip\fontdimen10 \scriptfont\tw@
    \advance\baselineskip\fontdimen12 \scriptfont\tw@
    \lineskip\thr@@\fontdimen8 \scriptfont\thr@@
    \lineskiplimit\lineskip
    \ialign{\hfil$\m@th\scriptstyle##$&$\m@th\scriptstyle{}##$\hfil\crcr
      #1\crcr
    }%
  }%
}
\makeatother

\crefname{equation}{}{}

\title[Spectral consequences of invariant measures]{Can one feel the existence of a non-trivial invariant measure?}

\author[Y. Steenbeck]{Yannic Steenbeck}

\address[Yannic Steenbeck]{TU Braunschweig, Institut für Mathematische Stochastik,
Germany.}
\email{yannic.steenbeck@tu-braunschweig.de}

\keywords{Linear dynamics, invariant measures, unimodular eigenvectors}

\subjclass[2020]{47A25, 37A05, 37A50}

\date{\today}

\usepackage{soul}

\begin{document}

\begin{abstract}
    It is shown that a bounded linear map on a complex separable Hilbert space with non-trivial invariant probability measures doesn't have to possess eigenvalues.
    This resolves a question indicated by Flytzanis in 1995 and concretely asked by Grivaux--López-Martínez from 2023 resp.\ Grivaux--Matheron--Menet from 2021.
    Still, as a positive result, we prove that every bounded linear operator on a separable complex Banach space for which a non-fixing invariant probability measure exists, has to have some approximate point spectrum on the unit circle minus \(1\).
\end{abstract}
\maketitle

\section{Introduction and main results}

It is of considerable mathematical interest to study invariant measures of various dynamics and relate their existence and behavior to the underlying operators, in particular their spectral properties. In particular the study of invariant (probability) measures of \emph{linear} operators is something that naturally appears in as diverse fields such as PDE, stochastics, dynamical systems and so on. 
Concretely, consider the setting of a separable complex Hilbert space \(\mathcal{H}\) and a bounded linear operator \(T \colon \mathcal{H} \to \mathcal{H}\). We want to understand a little bit the relation between the spectrum \(\sigma(T)\) of \(T\) and the existence of non-trivial \(T\)-invariant probability measures \(\varrho \neq \delta_0\) on \(\mathcal{H}\) (equipped with the Borel-\(\sigma\)-algebra). 

An easy construction of such a probability measure \(\varrho\) is possible if \(T\) has point-spectrum, i.e.\ eigenvalues, on the unit circle \(\mathbb{S}^1 := \{\lambda \in \C \,\colon\, \vert \lambda \vert = 1\}\), by sampling uniformly a phase \(\e^{i \theta}\) from \(\mathbb{S}^1\) and then multiplying a fixed eigenvector \(x \in \mathcal{H} \setminus\{0\}\) to an eigenvalue \(\alpha \in \mathbb{S}^1\) with it.
It is now a valid question, if the existence of a \(T\)-invariant non-trivial probability measure already implies that \(T\) has to possess eigenvalues (on the unit circle \(\mathbb{S}^1\)).
This question was indicated in \cite{Flytzanis1995} and later concretely posed as
\begin{question}[{\cite[Question 8.6]{GrivauxMartinez2023}}, {\cite[Question 8.3]{grivaux2021linear}}]\label{question:main_question}
    Is there a bounded linear map \(T \colon \mathcal{H} \to \mathcal{H}\) on some separable complex Hilbert space \((\mathcal{H}, \langle \cdot, \cdot\rangle_{\mathcal{H}})\) which has no eigenvalues but there exists a non-trivial \(T\)-invariant probability measure \(\varrho \neq \delta_0\) on \(\mathcal{H}\) (equipped with the Borel-\(\sigma\)-algebra)?
\end{question}

A first partial answer is given in the following
\begin{theorem}[{\cite[Theorem]{Flytzanis1995}} and {\cite[Lemma 4.4]{GrivauxMartinez2023}}]
    Let \(T\) be a bounded linear map \(T \colon \mathcal{H} \to \mathcal{H}\) on some separable complex Hilbert space \((\mathcal{H}, \langle \cdot, \cdot\rangle_{\mathcal{H}})\) and \(\varrho\) be a (non-trivial) \(T\)-invariant probability measure on \(\mathcal{H}\) fulfilling the moment condition \(\int \, \Vert x\Vert_{\mathcal{H}}^2 \, \varrho(dx) < \infty\). Then,
    \begin{align}
        \mathrm{supp}(\varrho)
        \subseteq \overline{\mathrm{span} \,\mathcal{E}(T)}^{\Vert\cdot\Vert_{\mathcal{H}}},
    \end{align} for the unimodular eigenvectors \(\mathcal{E}(T) = \{x \in \mathcal{H} \,\colon\, Tx = \lambda x \text{ for some } \lambda \in \mathbb{S}^1\}\).
\end{theorem}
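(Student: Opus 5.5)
The approach is to use the second moment to turn the invariant measure into a positive trace-class operator, and then apply a Jacobs--de Leeuw--Glicksberg type splitting. Let \(\varrho\) be \(T\)-invariant with \(\int \Vert x\Vert_{\mathcal{H}}^2\,\varrho(dx)<\infty\). We form the covariance-type operator
\begin{align}
    Q := \int x\otimes \overline{x}\,\varrho(dx), \qquad \langle Q u, v\rangle_{\mathcal{H}} = \int \langle u,x\rangle_{\mathcal{H}}\langle x,v\rangle_{\mathcal{H}}\,\varrho(dx).
\end{align}
This \(Q\) is positive and trace class, with \(\mathrm{tr}\,Q=\int\Vert x\Vert^2\,d\varrho\). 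Invariance gives \(TQT^*=Q\).

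A standard argument shows that \(\mathrm{supp}(\varrho)\subseteq \overline{\mathrm{ran}\,Q}\). Indeed, if \(u\perp \mathrm{ran}\,Q\), then \(\int|\langle x,u\rangle|^2\,d\varrho=0\). So \(\varrho\) is concentrated on \(u^\perp\), and since \(\mathcal{H}\) is separable we may take a countable family of such \(u\). It therefore suffices to show \(\overline{\mathrm{ran}\,Q}=\overline{\mathrm{ran}\,Q^{1/2}}\subseteq \overline{\mathrm{span}\,\mathcal{E}(T)}\).

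Set \(K:=\overline{\mathrm{ran}\,Q^{1/2}}\). From \(TQT^*=Q\) we get \(\Vert Q^{1/2}T^*u\Vert=\Vert Q^{1/2}u\Vert\) for all \(u\). Hence the map \(V\colon Q^{1/2}u\mapsto Q^{1/2}T^*u\) is a well-defined isometry on \(\mathrm{ran}\,Q^{1/2}\), and it extends to an isometry of \(K\). Its adjoint on \(K\) is intertwined with \(T\): we have \(TQ^{1/2}=Q^{1/2}V^*\) on \(K\), which follows from \(\langle TQ^{1/2}w,u\rangle=\langle w,Q^{1/2}T^*u\rangle=\langle w, VQ^{1/2}u\rangle\). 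Moreover, \(Q^{1/2}\) is compact, since \(Q\) is trace class.

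The main step is to show that the intertwining compact operator \(Q^{1/2}\) has range contained in \(\overline{\mathrm{span}\,\mathcal{E}(T)}\). The plan is as follows. The orbit \(\{T^nQ^{1/2}w\}=\{Q^{1/2}V^{*n}w\}\) is the image of a bounded set under a compact operator, so it is relatively compact. Hence \(Q^{1/2}w\) is an almost periodic vector for the contraction semigroup \(\{T^n\}\) restricted to the closed invariant subspace \(M:=\overline{\mathrm{ran}\,Q^{1/2}}\), on which \(T\) is power bounded, because \(\Vert T^nQ^{1/2}\Vert\le\Vert Q^{1/2}\Vert\) on the dense range. The Jacobs--de Leeuw--Glicksberg decomposition then says that relatively compact orbits lie in the closed span of unimodular eigenvectors plus the stable part \(\{y: T^ny\to0\}\).

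It remains to exclude the stable part. This is where I expect the main obstacle to lie. One way is to work directly with the unitary part of \(V^*\): the isometry \(V\) has a Wold decomposition, and on the shift part \(V^{*n}\to0\) strongly, so \(T^nQ^{1/2}w\to0\). But \(\mathrm{tr}(T^nQT^{*n})=\mathrm{tr}\,Q\) for all \(n\), and this would be contradicted by strong convergence to zero combined with compactness. Concretely, \(T^nQ^{1/2}=Q^{1/2}V^{*n}\), and \(\Vert Q^{1/2}V^{*n}\Vert_{HS}^2=\mathrm{tr}\,Q\) is constant. On the other hand, if \(V^{*n}\to0\) strongly on some subspace, then \(\Vert Q^{1/2}V^{*n}P\Vert_{HS}\to0\) there, since \(Q^{1/2}\) is Hilbert--Schmidt. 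This forces the shift part to be trivial, so \(V\) is unitary. On the unitary part, compactness of \(Q^{1/2}\) combined with the spectral theorem shows that the continuous spectral part is killed: Wiener's lemma gives that Cesàro averages of \(|\langle V^{*n}a,b\rangle|^2\) tend to \(0\). So \(Q^{1/2}\) maps the discrete part, which is spanned by eigenvectors \(V^*e=\bar\lambda e\) with \(|\lambda|=1\), onto vectors \(Q^{1/2}e\) satisfying \(TQ^{1/2}e=\bar\lambda Q^{1/2}e\). These are unimodular eigenvectors, so \(\mathrm{ran}\,Q^{1/2}\subseteq\overline{\mathrm{span}\,\mathcal{E}(T)}\), which concludes the proof.
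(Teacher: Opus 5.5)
Your first half is correct. \(Q\) is positive and trace class, \(TQT^*=Q\), \(\mathrm{supp}(\varrho)\subseteq\overline{\mathrm{ran}\,Q}\), and \(V\) is a well-defined isometry of \(K\) with \(TQ^{1/2}=Q^{1/2}V^*\) on \(K\). (The paper only cites this result and describes its proof as a comparison with a Gaussian measure. Your reduction is what makes that comparison work: the centered Gaussian measure with covariance \(Q\) is \(T\)-invariant with support \(\overline{\mathrm{ran}\,Q}\).)

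The second half tacitly assumes that unimodular eigenvectors of \(T\) must come from eigenvectors of \(V\), and all three of its steps fail. Take \(T=2B\), with \(B\) the backward shift on \(\ell^2(\N_0)\), and let \(\varrho\) be the law of \(\sum_{n\ge0}2^{-n}g_ne_n\) with i.i.d. standard complex Gaussians \(g_n\). This \(\varrho\) is \(T\)-invariant with finite second moment, \(Q=\mathrm{diag}(4^{-n})\), \(K=\ell^2(\N_0)\), and \(V(2^{-n}e_n)=Q^{1/2}T^*e_n=2^{-n}e_{n+1}\). So \(V\) is a pure unilateral shift, and your conclusion that the shift part is trivial is false.

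The error is in the Hilbert--Schmidt step. Since \(P=P_{K_s}\) commutes with \(V\) and \(V^n\) is isometric, \(\Vert Q^{1/2}V^{*n}P\Vert_{\mathrm{HS}}=\Vert PV^nQ^{1/2}\Vert_{\mathrm{HS}}=\Vert PQ^{1/2}\Vert_{\mathrm{HS}}\) for every \(n\). Strong convergence \(B_n\to0\) gives \(\Vert B_nA\Vert_{\mathrm{HS}}\to0\) for Hilbert--Schmidt \(A\), not \(\Vert AB_n\Vert_{\mathrm{HS}}\to0\), and your strongly null factor \(V^{*n}\) sits on the wrong side. The same example refutes power boundedness on \(M\), since \(\Vert T^n\Vert=2^n\) there. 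The bound \(\Vert T^nQ^{1/2}w\Vert\le\Vert Q^{1/2}\Vert\,\Vert w\Vert\) says nothing about \(\Vert T^ny\Vert/\Vert y\Vert\), because \(Q^{1/2}\) is not bounded below. So Jacobs--de Leeuw--Glicksberg is not available.

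The continuous part is not killed either. On \(\ell^2(\Z)\) let \(Te_n=w_ne_{n+1}\) with \(w_n=\frac12\) for \(n\ge0\) and \(w_n=2\) for \(n<0\), and let \(\varrho\) be the law of \(\sum_n2^{-|n|}g_ne_n\). Then \(Q=\mathrm{diag}(4^{-|n|})\) and \(Ve_{n+1}=e_n\). This is a bilateral shift with purely absolutely continuous spectrum and no eigenvectors, yet \(Q\neq0\). Wiener's lemma only gives \(T^nQ^{1/2}a\to0\) along a density-one set, which is no contradiction because \(T\) is not power bounded. Here the eigenvectors \(x^{(\lambda)}=\sum_n\lambda^{-n}2^{-|n|}e_n\), \(\lambda\in\mathbb{S}^1\), are \(Q^{1/2}\) applied to the non-normalizable generalized eigenvectors \((\lambda^{-n})_n\) of \(V^*\).

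This points to the missing idea: use that \(Q^{1/2}\) is Hilbert--Schmidt, not merely compact, through a kernel representation. Let \(W\) be a unitary extension of \(V\) to \(\widetilde K\supseteq K\). Since \(\widetilde K\ominus K\) is \(W^*\)-invariant, \(L:=Q^{1/2}P_K\colon\widetilde K\to\mathcal{H}\) satisfies \(TL=LW^*\). Represent \(W^*\) as multiplication by \(\bar z\) on \(\bigoplus_jL^2(\mathbb{S}^1,\mu_j)\). The Hilbert--Schmidt operator \(L\) then has the form \(f\mapsto\sum_j\int f_jk_j\,\mathrm{d}\mu_j\) with \(k_j\in L^2(\mu_j;\mathcal{H})\). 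The relation \(TL=LW^*\) forces \(Tk_j(z)=\bar z\,k_j(z)\) for \(\mu_j\)-a.e.\ \(z\). Hence \(\mathrm{ran}\,Q^{1/2}=\mathrm{ran}\,L\subseteq\overline{\mathrm{span}\,\mathcal{E}(T)}\), and this treats atomic, continuous and shift parts at once.
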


The preceding assertion and especially its proof via comparison with a Gaussian measure, may provoke the thought that some moment condition is just a technical artifact here. This is indeed not the case and we can give an answer to Question~\ref{question:main_question} with the following theorem.
\begin{theorem}\label{theorem:example}
    There exists a separable complex Hilbert space \((\mathcal{H}, \langle \cdot, \cdot\rangle_{\mathcal{H}})\) which supports a bounded and boundedly invertible linear map \(T \colon \mathcal{H} \to \mathcal{H}\) which has no eigenvalues but a non-trivial \(T\)-invariant probability measure \(\varrho \neq \delta_0\) on \(\mathcal{H}\).
\end{theorem}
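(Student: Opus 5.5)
We have to produce an operator with no eigenvalues but with a non-trivial invariant probability measure. By the Flytzanis-type theorem above, such a measure must have infinite second moment. The plan is to realise the dynamics as a translation on a compact abelian group that carries no characters of eigenvector type in \(\mathcal{H}\), and to push Haar measure into \(\mathcal{H}\) through a badly integrable embedding.

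\emph{Step 1: the model.} We work with a weighted bilateral shift, or more flexibly a multiplication-type operator, on a weighted space \(\mathcal{H} = \ell^2(\Z, w)\), given by \(Tx = (x_{n-1})_n\) with weights satisfying \(w_{n+1}/w_n \in [c, C]\). Bounded weight ratios make \(T\) bounded and boundedly invertible. Weighted bilateral shifts never have eigenvalues of modulus \(1\) unless the weights permit a constant-modulus orbit; to exclude all eigenvalues we need \(\sum_n |\lambda|^{2n} w_n = \infty\) for every \(\lambda\neq 0\). This holds if \(w_n\) does not decay exponentially in either direction, for instance \(w_n \geq 1\) somewhere along both tails. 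We also need \(\lambda = 0\) excluded, and this is automatic by invertibility.

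\emph{Step 2: the measure.} The central idea is that an invariant measure does not need to be concentrated on a compact set of bounded orbits. It only needs a shift-invariant law of random sequences \((\xi_n)_{n\in\Z}\) that lie in \(\ell^2(w)\) almost surely. We take \(\xi_n = \eta_n\) for a stationary process. To make \(\sum_n w_n|\xi_n|^2<\infty\) almost surely while the law remains shift-invariant, we choose a stationary sequence that is \emph{sparse}: a stationary random set \(S\subset\Z\) with density zero, realised for instance by a stationary renewal process with infinite mean gap (a null-recurrent-type construction, normalised through a random heavy-tailed "first point" so that it is stationary as a probability measure). A cleaner alternative avoids infinite-mean renewals, which admit no stationary probability version. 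In that alternative we take \(\xi_n = a_{n-U}\) for a fixed sequence \(a\) and random shift \(U\), but this requires compactness. We therefore prefer a random walk tree: let \(\xi = Z\cdot\mathbf 1_{\{n = N\}}\) averaged suitably, i.e.\ a mixture over \(N\) cannot be stationary on \(\Z\). The honest plan is instead to replace \(\Z\)-indexing by a multiplicatively weighted construction. We choose weights \(w_n\) and a stationary ergodic process \((\xi_n)\) with heavy tails, \(\mathbb E|\xi_0|^2=\infty\), such that \(\sum_n w_n|\xi_n|^2<\infty\) a.s. The weights \(w_n\) decay polynomially, say \(w_n=(1+|n|)^{-2}\), which is still subexponential, so no eigenvalues appear. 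Any stationary process with \(\mathbb E\log^+|\xi_0|<\infty\) satisfies \(|\xi_n| = e^{o(|n|)}\) a.s. (Borel–Cantelli), but we need the stronger bound \(|\xi_n|^2 w_n\) summable. So we choose \(\xi_n\) i.i.d.\ with \(\mathbb P(|\xi_0|>t)\sim t^{-p}\) for some \(p \in (1,2)\). Borel–Cantelli then gives \(|\xi_n|\le |n|^{1/p+\epsilon}\) eventually, and summability follows when \(2/p+2\epsilon-2<-1\), i.e.\ \(p>2\). This conflicts with \(p<2\). We fix the conflict by taking \(w_n=(1+|n|)^{-q}\) with \(q\) large: summability needs \(2/p - q < -1\), which is fine for any \(p>0\). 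The law \(\varrho\) of \(\xi\) restricted to \(\ell^2(w)\), a full-measure shift-invariant set, is then \(T\)-invariant (with \(T\) the shift acting on coordinates) and differs from \(\delta_0\). The weight ratios \((1+|n|)^q/(1+|n+1|)^q\) are bounded above and below, so \(T\) is bounded and invertible.

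\emph{Step 3: checks.} We verify the following. Measurability: \(\ell^2(w)\) is Borel in \(\C^\Z\), and the Borel structures agree on it. Invariance: since \(T\) is the coordinate shift, this is exactly stationarity. Absence of eigenvalues: \(Tx=\lambda x\) forces \(x_n=\lambda^{n}x_0\), and \(\sum (1+|n|)^{-q}|\lambda|^{2n}=\infty\) for all \(\lambda\ne0\) because one tail grows exponentially when \(|\lambda|\neq 1\). For \(|\lambda|=1\) this sum converges, so that case needs care, and it is the main obstacle: polynomial weights admit unimodular eigenvectors. We must therefore take weights \(w_n\) that are not summable, e.g.\ \(w_n\equiv1\), and then the i.i.d.\ process fails. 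The fix is to replace i.i.d.\ by a stationary process whose samples are a.s.\ in \(\ell^2\) and nonzero, which is impossible for ergodic ones on \(\Z\). Hence we should move to a non-shift operator. Concretely, we use \(T=\bigoplus_k\) of weighted shifts on \(\Z/ m_k\)-blocks glued so that the spectrum is continuous: an infinite-odometer operator, where \(T\) acts on \(L^2\)-type sequences indexed by the \(2\)-adic integers via \(x\mapsto x(\cdot+1)\) with weights chosen so that characters (eigenvectors) have infinite norm, while the Haar-distributed random point-mass-like vectors do not. Choosing these weights is where the real work lies.
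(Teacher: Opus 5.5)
\textbf{There is a genuine gap: the proposal contains no working construction.}

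Steps 1--2 build a scalar weighted shift on $\ell^2(\Z,w)$ carrying an i.i.d.\ heavy-tailed process. As you find yourself in Step~3, this fails. Eigenvectors have the form $x_n=\lambda^{\pm n}x_0$, so excluding $|\lambda|=1$ needs $\sum_n w_n=\infty$; subexponential decay is not enough. Once the weights are non-summable, the process is no longer almost surely in the space.

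The ``infinite-odometer'' operator you fall back on is never specified. It is not even said whether the space is $\ell^2$ over the orbit $\Z\subset\Z_2$ (which is a scalar weighted shift again) or a weighted $L^2$-space on $\Z_2$. No weights or measure are given, and boundedness, absence of eigenvalues and almost-sure membership are not verified. ``Choosing these weights is where the real work lies'' is the entire content of the theorem.

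Moreover, the obstruction in Step~3 is not an artefact of the i.i.d.\ choice or of $w\equiv1$; it holds for every scalar weighted shift. An invariant $\varrho$ on $\ell^2(\Z,w)$ makes the coordinates a stationary sequence $(\xi_n)$. Suppose $\sum_n w_n=\infty$ and $p:=\mathbb P(|\xi_0|^2>\epsilon)>0$, and let $A_n=\{|\xi_n|^2>\epsilon\}$.
\begin{itemize}
\item If $w_n\ge1$ for infinitely many $n$, reverse Fatou along those $n$ gives $\mathbb P(\limsup A_n)\ge p$.
\item Otherwise, group the indices with $w_n<1$ into disjoint finite blocks $B_r$ with $\sum_{n\in B_r}w_n\in[1,2]$. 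Then $h_r:=\sum_{n\in B_r}w_n\mathbf 1_{A_n}\le2$ and $\mathbb E h_r\ge p$ force $\mathbb P(h_r\ge p/2)\ge p/4$, so $\mathbb P(\limsup_r\{h_r\ge p/2\})\ge p/4$.
\end{itemize}
Either way $\sum_n w_n|\xi_n|^2=\infty$ with positive probability, which is impossible for a measure on the space. Hence $\xi=0$ almost surely and $\varrho=\delta_0$. So a scalar weighted shift with a non-trivial invariant measure has every point of $\mathbb S^1$ as an eigenvalue, and no tuning of scalar weights or of the process can rescue Steps 1--2.

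\textbf{The missing idea.} You need a mechanism that decouples two conditions:
\begin{itemize}
\item ``every \emph{fixed} direction has divergent weighted norm'', which kills eigenvectors since they have the rigid form $x_n=\lambda^nx_0$;
\item ``the random orbit has finite norm almost surely''.
\end{itemize}
With scalar weights these are the same condition.

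The paper achieves the decoupling with a vector-valued tape. It takes $\mathcal H_A=\{x\in\mathcal K^\Z:\sum_n\langle A_nx_n,x_n\rangle_{\mathcal K}<\infty\}$ with $\mathcal K=H^1(\mathbb S^1)$. The weights are operators, $A_n=\sum_m\e^{-|n-m|}\langle\cdot,v_m\rangle_{\mathcal K}v_m+b_n$ with $b_n=\e^{-\sqrt{1+|n|}}$, and for these the left shift is bounded and boundedly invertible.
\begin{itemize}
\item The subexponential $b_n$ excludes $|\lambda|\neq1$.
\item The anti-frame property $\sum_m|\langle x_0,v_m\rangle_{\mathcal K}|^2=\infty$ for all $x_0\neq0$ excludes $|\lambda|=1$. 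It is obtained from $N_r\approx 8^{2r}$ copies of the normalised hat functions $u_{r,k}$ at every mesh level $r$.
\end{itemize}

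The invariant law is that of $X(\theta)_n=k_{\theta+n\alpha}$, where $k_\phi$ is the reproducing kernel of $H^1(\mathbb S^1)$, $\alpha/2\pi$ is irrational and $\theta$ is uniform. Its direction rotates along the tape, and $\Vert X(\theta)\Vert_{\mathcal H_A}^2\lesssim1+\sum_n\e^{-|n|}\sum_m|v_m(\theta+\alpha(n+m))|^2$.

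The anti-frame property does not care where on $\Z$ the $v_m$ sit, so one can place the copy $(r,k,l)$ at a position $j$ with $\mathrm{supp}\,u_{r,k}\subset C_r+\alpha j$, for disjoint arcs $C_r$ of length $2\pi2^{-r}$. Together with $\Vert u_{r,k}\Vert_\infty^2\le\pi8^{-r}$, this bounds the phase load by $\lesssim N_r$ on $C_r$. The phase load therefore has a finite logarithmic moment, and Borel--Cantelli yields almost-sure finiteness. This positional freedom is exactly what is lost when $\mathcal K=\C$.
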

\begin{remark}
    Close to our initial motivation to study this problem, it should be even possible, using e.g.\ very similar ideas as in the construction presented here, to find a complex separable Hilbert space \((\mathcal{H}, \langle \cdot, \cdot\rangle_{\mathcal{H}})\) supporting an analytic \(C_0\)-semigroup \((T_t)_{t \geq 0}\) on it such that no \(T_t\), \(t > 0\), has any eigenvalues, together with a non-trivial probability measure \(\varrho \neq \delta_0\) such that \(\varrho\) is \(T_t\)-invariant for all \(t \geq 0\). 
\end{remark}

Still, the existence of a non-trivial invariant measure mirrors in the approximate point spectrum, and we can give the following positive result, which is even valid on Banach spaces.
\begin{theorem}\label{theorem:spectrum_on_circle_minus_1_from_nonfixing_invariant_law}
	Let \(T\) be a bounded linear map on some separable complex Banach space \(\mathcal{X}\).
    Suppose there is a non-trivial \(T\)-invariant probability measure \(\varrho \neq \delta_0\) on \(\mathcal{X}\) (equipped with the Borel-\(\sigma\)-algebra).
    Then, it holds for the approximate point spectrum \(\sigma_{\mathrm{ap}}(T)\) of \(T\) that
    \begin{align}
        \sigma_{\mathrm{ap}}(T) \cap \mathbb{S}^1 \neq \emptyset.
    \end{align}
    Furthermore, if \(\varrho\) is \emph{non-fixing}, i.e.\ if \(\varrho(\{x \in \mathcal{X} \,\colon\, Tx = x\}) < 1\), then it holds that
    \begin{align}
        \sigma_{\mathrm{ap}}(T) \cap \mathbb{S}^1\!\setminus\!\{1\} \neq \emptyset.
    \end{align}
\end{theorem}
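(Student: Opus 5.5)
The plan is a contradiction argument. We assume the relevant part of the unit circle misses \(\sigma_{\mathrm{ap}}(T)\). This gives uniform lower bounds \(\Vert (T-\lambda)y\Vert \geq c\Vert y\Vert\) on that part of the circle. We turn these, via a telescoping identity and a Parseval argument on scalar orbit sequences, into square-summability of \(f(T^n x)\) for a suitable functional \(f\). Invariance of \(\varrho\) forbids this.

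\textbf{First assertion.} Suppose \(\sigma_{\mathrm{ap}}(T)\cap\mathbb{S}^1=\emptyset\). Since \(\lambda\mapsto \inf_{\Vert y\Vert=1}\Vert (T-\lambda)y\Vert\) is continuous and \(\mathbb{S}^1\) is compact, there is \(c>0\) with \(\Vert (T-\lambda)y\Vert\geq c\Vert y\Vert\) for all \(\lambda\in\mathbb{S}^1\) and \(y\in\mathcal{X}\). For \(x\in\mathcal{X}\) and \(N\in\N\) we have the telescoping identity
\begin{align}
(\lambda^{-1}T-1)\sum_{n=0}^{N-1}\lambda^{-n}T^nx=\lambda^{-N}T^Nx-x,
\end{align}
hence \(\Vert\sum_{n<N}\lambda^{-n}T^nx\Vert\leq c^{-1}(\Vert T^Nx\Vert+\Vert x\Vert)\) uniformly in \(\lambda\in\mathbb{S}^1\). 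Fix \(f\in\mathcal{X}^*\) with \(\Vert f\Vert=1\) and put \(a_n=f(T^nx)\). The trigonometric polynomial \(\sum_{n<N}a_n\lambda^{-n}\) is then bounded by \(c^{-1}(\Vert T^Nx\Vert+\Vert x\Vert)\) on \(\mathbb{S}^1\). By Parseval,
\begin{align}
\sum_{n<N}\vert f(T^nx)\vert^2\leq c^{-2}(\Vert T^Nx\Vert+\Vert x\Vert)^2 .
\end{align}
Now we use the measure. Since \(\mathcal{X}\) is separable, countably many functionals separate points. As \(\varrho\neq\delta_0\), there are \(f\) and \(\eta>0\) with \(p:=\varrho(\vert f\vert>\eta)>0\). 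By tightness (Ulam), choose \(R\) with \(\varrho(\Vert x\Vert>R)<\delta\), where \(\delta<p/4\). Invariance gives \(\varrho(A_N)>1-2\delta\) for \(A_N:=\{\Vert x\Vert\leq R,\ \Vert T^Nx\Vert\leq R\}\). Integrating the bound over \(A_N\) yields
\begin{align}
\sum_{n<N}\int_{A_N}\vert f(T^nx)\vert^2\,\varrho(dx)\leq 4c^{-2}R^2 .
\end{align}
On the other hand, invariance gives \(\varrho(\vert f\circ T^n\vert>\eta)=p\), so each summand is at least \(\eta^2(p-2\delta)\). The left-hand side is therefore at least \(N\eta^2 p/2\), which tends to infinity, a contradiction.

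\textbf{Second assertion.} Suppose \(\sigma_{\mathrm{ap}}(T)\cap\mathbb{S}^1\setminus\{1\}=\emptyset\). Now the lower bound \(c(\delta)\) is only uniform on the arc \(\Gamma_\delta=\{\vert\lambda-1\vert\geq\delta\}\), and may degenerate as \(\delta\to0\).
\begin{itemize}
\item \emph{Killing the fixed part.} Replace \(\varrho\) by \(\mu:=(T-1)_\#\varrho\). Since \(T\) commutes with \(T-1\), \(\mu\) is again \(T\)-invariant, and \(\mu\neq\delta_0\) exactly because \(\varrho\) is non-fixing. With \(y=(T-1)x\) and \(b_n=f(T^ny)\), the transform \(\hat b(\lambda)=\sum_{n<N}b_n\lambda^{-n}\) equals \((\lambda-1)\hat a(\lambda)\) plus two boundary terms of size \(\leq 2\Vert T\Vert\max(\Vert x\Vert,\Vert T^N x\Vert)\). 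So \(\hat b\) carries a factor vanishing at \(\lambda=1\).
\item \emph{Splitting the Parseval integral.} On \(\Gamma_\delta\) the telescoping bound gives \(\vert\hat b\vert\leq C(\delta,R)\) for \(x\in A_N\). Near \(1\) we must control \(\int_{\vert\lambda-1\vert<\delta}\vert\lambda-1\vert^2\vert\hat a(\lambda)\vert^2\,d\lambda\). Here I would not use the pointwise bound \(N\) on \(\hat a\). Instead I would integrate in \(\varrho\) first, exploiting stationarity of \((f(T^nx))_n\) under \(\varrho\): the expected Fejér-type average \(\int\vert\hat a\vert^2 d\varrho\) should be controlled by \(N\) times a spectral-measure density. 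Then the contribution of a small arc is \(o(N)\) as \(\delta\to0\), uniformly in \(N\), once \(\mu\) has no atom of its spectral measure at \(1\). Without moments, this needs truncation (working with \(f\) bounded, e.g.\ \(\arctan\)-type cutoffs of \(f\circ T^n\) on \(A_N\)).
\item \emph{Concluding.} Choose \(\delta\) so that the near-\(1\) part is \(\leq N\eta^2p/8\). The far part is then \(O_\delta(1)\), contradicting the lower bound \(N\eta^2p/2\) exactly as in the first part.
\end{itemize}

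\textbf{Main obstacle.} The hard step is the near-\(1\) estimate: showing that the arc around \(1\) contributes only \(o(N)\), uniformly in \(N\), without any moment assumption. If the stationarity and truncation route fails, my fallback is to iterate the first bullet. That is, use \(y=(T-1)^kx\) with \(k\) large to get a factor \(\vert\lambda-1\vert^{2k}\), combined with a choice of \(\delta=\delta(N)\to0\) slowly. This requires a quantitative lower bound for \(c(\delta)\), which in turn I would try to extract from the boundary behaviour of \(\sigma_{\mathrm{ap}}(T)\).
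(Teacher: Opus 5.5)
Your proof of the first assertion is correct and takes a different route from the paper. The paper gets $\sigma_{\mathrm{ap}}(T)\cap\mathbb{S}^1\neq\emptyset$ only as a by-product of the second statement, since a fixing $\varrho\neq\delta_0$ yields the eigenvalue $1$. It proves the second statement via topological Poincaré recurrence $(T^{N_m}-1)x\to0$, evaluating the telescoping identity at $N_m$-th roots of unity so that the remainder is $(T^{N_m}-1)x$, and it uses Lemma~\ref{lemma:fourier_lemma} to keep $\Vert u_{N,k}\Vert$ bounded below. You instead combine the uniform lower bound $c$ on $\mathbb{S}^1$ with Parseval to cap $\sum_{n<N}\vert f(T^nx)\vert^2$ by $4R^2c^{-2}$ on $A_N$, while invariance forces the integral to grow linearly. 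This is shorter and uses neither recurrence nor moments. What Parseval over the whole circle cannot do is distinguish Fourier energy near $\lambda=1$ (which fixed vectors produce) from energy elsewhere.

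That is where the second assertion has a genuine gap. Your near-$1$ estimate is a spectral-measure computation for the stationary sequence $b_n=f(T^ny)$ under $\mu$. Both it and the ``no atom at $1$'' step (which would come from $\frac1N\sum_{n<N}b_n=(a_N-a_0)/N$ and the mean ergodic theorem) require $f\circ T^n\in L^2$, and the hypotheses do not provide this.

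Truncation cannot repair it. The only link between the scalar sequence and the bound $c(\delta)$ on $\Gamma_\delta$ is the linear identity $\hat a(\lambda)=f\big(\sum_{n<N}\lambda^{-n}T^nx\big)$. A sequence such as $\arctan(f(T^nx))$ satisfies no telescoping identity, so its spectral behaviour says nothing about the Fourier energy that $c(\delta)$ controls. Restricting to $A_N$ instead destroys stationarity and bounds only $x$ and $T^Nx$, not the intermediate $T^nx$ that enter the near-$1$ integral. The fallback needs a rate for the degeneration of $c(\delta)$ as $\lambda\to1$, which the hypotheses do not supply (it can degenerate arbitrarily fast). Even with a rate, $\hat a$ has no pointwise control near $1$ on $A_N$.

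The missing idea is to convert non-fixing into \emph{high-frequency} energy combinatorially rather than spectrally. Take $y\in\mathrm{supp}(\varrho)$ with $Ty\neq y$, and $\varphi$, $c$, $U$ as in \eqref{equation:separation_of_z_Tz_by_linear_functional}, with $\delta$ now the separation constant from there. Every visit $T^nx\in U$ is then a $\frac{\delta}{4}$-upcrossing of $a_n:=\mathrm{Re}[\varphi(T^nx)]-c$. A non-zero real trigonometric polynomial of degree $<r'$ has at most $2r'-2$ zeros per period. Run the zero count behind Lemma~\ref{lemma:fourier_lemma} only over upcrossings where $\vert a_n-P(n)\vert<\frac{\delta}{4}$ at both endpoints. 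It shows that $r$ upcrossings force $\sum_{n<N}\vert a_n-P(n)\vert^2\geq(r-r'+1)\frac{\delta^2}{16}$, where $P$ is the part of degree $<r'$.

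Plugged into your framework, this closes the proof without recurrence or moments:
\begin{enumerate}
\item Set $q:=\varrho(U)$, $r':=\lceil qN/4\rceil$ and $\delta_0:=2\sin(\pi q/4)$, and write $\vert k\vert_N:=\min_{j\in\Z}\vert k-jN\vert$.
\item By discrete Parseval, the left side above equals $\frac1N\sum_{\vert k\vert_N\geq r'}\vert\hat a(k)\vert^2$.
\item For such $k$ one has $\hat a(k)=\frac12\big(\varphi(u_{N,k})+\overline{\varphi(u_{N,-k})}\big)$ with $\Vert u_{N,\pm k}\Vert\leq 2R\,c(\delta_0)^{-1}$ on $A_N$.
\item By invariance and Markov's inequality, the visit frequency to $U$ up to time $N$ is at least $q/2$ on a set of measure at least $q/2$. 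This set meets $A_N$ once $\varrho(\Vert x\Vert>R)<q/8$.
\item Hence $(\frac{qN}{4}-1)\frac{\delta^2}{16}\leq 4R^2c(\delta_0)^{-2}$ for all large $N$, a contradiction.
\end{enumerate}
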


Let us remark that for \emph{recurrent} linear operators \(T\), several forms of spectral restrictions have been obtained before. Costakis, Manoussos and Parissis demonstrated that every component of the spectrum \(\sigma(T)\) has to intersect the unit circle \(\mathbb{S}^1\), cf. \cite[Proposition 2.11]{Costakis2014Recurrent}. We remark that the same proof technique via Riesz projections can already be used to show that \(\sigma(T) \cap \mathbb{S}^1 \neq \emptyset\), but it is probably not easily adapted to show the same for the approximate point spectrum \(\sigma_{\mathrm{ap}}(T)\) instead of \(\sigma(T)\), and it cannot be used at all to establish \(\sigma(T) \cap \mathbb{S}^1\setminus\!\{1\} \neq \emptyset\), since the relevant Riesz projections are generally not available in this case.
The fact that isolated points of the spectrum \(\sigma(T)\) for \emph{upper-frequently recurrent} linear operators \(T\) have to lie on the unit circle \(\mathbb{S}^1\) was obtained, using techniques of Shkarin \cite{Shkarin2009}, by Bonilla, Grosse-Erdmann, López-Martínez and Peris, cf. \cite[Theorem 4.9]{bonilla2022frequently}. Not unrelatedly, Hedlund \cite[Theorem 1]{hedlund1971expansive} characterized \emph{uniform expansivity} of invertible bounded linear operators \(T\) by absence of approximate point spectrum \(\sigma_{\mathrm{ap}}(T)\) on the unit circle \(\mathbb{S}^1\).

Our Theorem~\ref{theorem:spectrum_on_circle_minus_1_from_nonfixing_invariant_law} is distinct in nature, since we assume only the existence of a non-trivial \(T\)-invariant probability measure \(\varrho\), which does not need to make \(T\) recurrent on the whole Banach space \(\mathcal{X}\). We specifically detect an approximate eigenvalue on \(\mathbb{S}^1 \setminus\{1\}\) whenever the measure \(\varrho\) is not concentrated on fixed points of \(T\).

Proofs for Theorem~\ref{theorem:example} and ~\ref{theorem:spectrum_on_circle_minus_1_from_nonfixing_invariant_law} are provided in the next section.

\subsection*{Outlook}
There are of course a lot of natural follow-up questions regarding sharpness of results, moment, etc. As I came to this question from a context where one actually would like to understand what all the possible invariant measures are, let me close this introduction with the following soft \textbf{questions}. Is it possible to give reasonable necessary and sufficient conditions on the involved operators or their spectral geometry that would more abstractly characterize the possibility of non-trivial invariant measures? In particular, is there a more abstract way to synthesize invariant non-fixing measure without presence of point-spectrum? Conversely, are there natural situations where the spectrum on the unit circle which is forced by existence of non-trivial invariant measures has to be point spectrum?
One concrete example I would like to understand is the operator \(\frac{\d}{\d x}(a(x) \frac{\d}{\d x})\) and the invariant measures for the semigroup it generates on power-weighted \(L^2\)-spaces or also Sobolev spaces (of negative regularity) on the real line \(\R\), where \(a\) is some very regular non-negative function that is allowed to grow at most quadratically, but also vanish in between.

\section{Proofs}

\subsection{The infinite-tape construction}\label{section:infinite_tape_construction}

In this section we will prove Theorem~\ref{theorem:example} by constructing the involved Hilbert space, bounded linear map, and probability measure in a hopefully somewhat transparent and pedagogical manner. A more concise version of the proof is given right at the end of the section.

The core idea consists of balancing membership of eigenvectors in the constructed Hilbert space versus almost-sure membership of an invariant process in this Hilbert space, by tuning the inner product as to favor the latter but excluding the former.

Now, first consider the very straight-forward construction of an invariant measure, in the case that \(T\) is injective but has an eigenvector \(x \in \mathcal{H}\) of \(T\) with eigenvalue \(\e^{i\alpha}\), \(\alpha \in [0, 2\pi)\), on the unit circle \(\mathbb{S}^1 = \{\theta \in \C \,\colon\, \vert\theta\vert = 1\}\). 
In all of the following, we will abuse notation and identify \(\theta \in \mathbb{S}^1\) with its unique argument in \([0, 2\pi)\); in particular we will write things like \(\theta + \alpha\) for \(\theta, \alpha \in \mathbb{S}^1\) and mean \(\theta \alpha \in \mathbb{S}^1\) or equivalently \(\theta + \alpha \mod 2\pi \in [0, 2\pi)\) for the arguments. 
Now, the distribution \(\varrho\) of the process
\begin{align*}
	X(\theta)
	:= \e^{i \theta} x,
\end{align*} with \(\theta\) sampled uniformly from \([0, 2 \pi)\), is \(T\)-invariant, because it has the property
\begin{align}\label{equation:pointwise_process_identity}
	T X(\theta)
	= X(\theta + \alpha \mod 2 \pi).
\end{align} 
But the property \eqref{equation:pointwise_process_identity} can also be realized in other ways, e.g.\ in the following. Consider as linear operator \(T\) the bilateral left-shift on \(\mathcal{K}^\Z\), for some fixed Hilbert space \(\mathcal{K}\), given by
\begin{align*}
	(Tx)_n 
	:= x_{n+1}, 
	\quad n \in \Z.
\end{align*}
If we define
\begin{align}\label{equation:definition_formal_process}
	[X(\theta)]_n
	:= f(\theta + n \alpha), \quad n \in \Z,
\end{align} for some function \(f \colon S^1 \to \mathcal{K}\), we get 
\begin{align*}
	[TX(\theta)]_n
	= [X(\theta)]_{n+1}
	= f(\theta + (n+1) \alpha)
	= f((\theta + \alpha) + n\alpha)
	= [X(\theta + \alpha)]_n,
\end{align*} i.e.\ \eqref{equation:pointwise_process_identity}.

Of course, this is just a formal construction yet, but we can implement it on a Hilbert space as follows.
\begin{definition}[Abstract infinite tape]\label{definition:abstract_infinite_tape}
    Let \(\mathcal{K}\) be some complex separable Hilbert space (the \emph{transverse Hilbert space}) and \(A = (A_n)_{n \in \mathbb{Z}}\) a bilateral sequence of bounded linear Operators \(A_n \colon \mathcal{K} \to \mathcal{K}\) with \(A_n \geq c_n\), for some \(c_n > 0\), \(n \in \Z\).
    We define the space
    \begin{align*}
    	\mathcal{H}_A
    	:= \{x = (x_n)_{n \in \Z} \in \mathcal{K}^\Z \,\colon\, \sum_{n \in \Z} \langle A_n x_n, x_n \rangle_{\mathcal{K}} < \infty\}
    \end{align*} and the inner product
    \begin{align*}
    	\langle x, y \rangle_{\mathcal{H}_A}
    	:= \sum_{n \in \Z} \langle A_n x_n, y_n \rangle_{\mathcal{K}}.
    \end{align*}
\end{definition}

Indeed, this gives a well-defined Hilbert space.
\begin{lemma}
	\((\mathcal{H}_A, \langle \cdot, \cdot \rangle_{\mathcal{H}_A})\) is a separable complex Hilbert space.
\end{lemma}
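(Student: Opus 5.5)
The plan is to identify \(\mathcal{H}_A\) isometrically with a weighted \(\ell^2\)-direct sum of copies of \(\mathcal{K}\), and then read off completeness and separability from standard facts. The key observation is that each \(A_n\) is bounded, self-adjoint (implicit in \(A_n \geq c_n > 0\)) and coercive. Hence the sesquilinear form \((u,v) \mapsto \langle A_n u, v\rangle_{\mathcal{K}}\) is an inner product on \(\mathcal{K}\) with norm \(\Vert u\Vert_n^2 := \langle A_n u, u\rangle_{\mathcal{K}}\). This norm satisfies
\begin{align*}
    c_n \Vert u\Vert_{\mathcal{K}}^2 \leq \Vert u\Vert_n^2 \leq \Vert A_n\Vert \,\Vert u\Vert_{\mathcal{K}}^2,
\end{align*}
so it is equivalent to \(\Vert\cdot\Vert_{\mathcal{K}}\). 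Consequently \(\mathcal{K}_n := (\mathcal{K}, \langle A_n\cdot,\cdot\rangle_{\mathcal{K}})\) is again a complex separable Hilbert space. Alternatively one may use \(A_n^{1/2}\), which is a bounded bijection of \(\mathcal{K}\) with \(\Vert u\Vert_n = \Vert A_n^{1/2}u\Vert_{\mathcal{K}}\).

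By definition, \(\mathcal{H}_A\) is exactly the Hilbert direct sum \(\bigoplus_{n\in\Z}\mathcal{K}_n\), i.e.\ square-summable sequences with \(x_n \in \mathcal{K}_n\). Vector space structure and well-definedness of the inner product come from the pointwise bound
\begin{align*}
    \vert\langle A_n x_n, y_n\rangle_{\mathcal{K}}\vert \leq \Vert x_n\Vert_n \Vert y_n\Vert_n,
\end{align*}
followed by Cauchy–Schwarz in \(\ell^2(\Z)\); the same bound also gives closure under sums. Sesquilinearity and positive definiteness are termwise, using \(\langle A_n x_n, x_n\rangle_{\mathcal{K}} \geq c_n\Vert x_n\Vert_{\mathcal{K}}^2\).

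For completeness, I would run the usual \(\ell^2\) argument. Take a Cauchy sequence \((x^{(k)})_k\). Each coordinate \((x^{(k)}_n)_k\) is Cauchy in \(\mathcal{K}_n\), hence converges to some \(x_n\). Fatou's lemma for the sum over \(n\), applied on finite partial sums and then letting \(k \to \infty\), shows that \(x \in \mathcal{H}_A\) and \(x^{(k)} \to x\). This step is routine, and I expect it to be the only place needing care.

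For separability, choose a countable dense set \(D \subseteq \mathcal{K}\). It is dense in each \(\mathcal{K}_n\) by norm equivalence. Then the finitely supported sequences with entries in \(D\) form a countable set that is dense in \(\mathcal{H}_A\): first truncate the tail, then approximate the finitely many coordinates. Rational-complex coefficients are not even needed, since \(D\) is already dense.
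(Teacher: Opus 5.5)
Your proof is correct and is the standard argument; the paper itself gives no proof of this lemma (it only remarks that \(\mathcal{H}_A\) is well-defined). Identifying \(\mathcal{H}_A\) with the Hilbert direct sum \(\bigoplus_{n \in \Z} (\mathcal{K}, \langle A_n \cdot, \cdot \rangle_{\mathcal{K}})\), each summand being a separable Hilbert space by the norm equivalence \(c_n \Vert u \Vert_{\mathcal{K}}^2 \leq \langle A_n u, u \rangle_{\mathcal{K}} \leq \Vert A_n \Vert \, \Vert u \Vert_{\mathcal{K}}^2\), is exactly what is implicitly intended, and your treatment of self-adjointness, completeness via finite partial sums, and separability via finitely supported sequences with entries in a countable dense set is sound.
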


Also, the bilateral left-shift \(T\) can be made into a bounded linear map on \((\mathcal{H}_A, \langle \cdot, \cdot \rangle_{\mathcal{H}_A})\), if the family \(A\) is chosen correctly.
\begin{lemma}\label{lemma:conditions_for_bilat_left_shift_bounded_boundedly_invertible}
	Assume that there is some universal \(C > 0\) such that
	\begin{align}\label{equation:comparability_of_neighbouring_operators}
		C^{-1} A_n \leq A_{n+1} \leq C A_n, \quad n \in \Z.
	\end{align}
	Then, \(T\) is boundedly invertible bounded linear operator on \(\mathcal{H}_A\).
\end{lemma}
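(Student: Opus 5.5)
The plan is a direct norm computation for \(T\) and for its inverse, the bilateral right-shift. Write \(S\) for the right-shift, \((Sx)_n := x_{n-1}\), \(n \in \Z\). Formally \(TS = ST = \mathrm{id}\) on \(\mathcal{K}^\Z\). Once both \(T\) and \(S\) are shown to map \(\mathcal{H}_A\) boundedly into itself, \(T\) is a bijection of \(\mathcal{H}_A\) with bounded inverse \(S\), which is the claim. Linearity of both maps is clear.

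For \(T\), take \(x \in \mathcal{H}_A\) and re-index the sum defining the norm:
\begin{align*}
    \Vert Tx \Vert_{\mathcal{H}_A}^2
    = \sum_{n \in \Z} \langle A_n x_{n+1}, x_{n+1} \rangle_{\mathcal{K}}
    = \sum_{m \in \Z} \langle A_{m-1} x_m, x_m \rangle_{\mathcal{K}}.
\end{align*}
The left inequality in \eqref{equation:comparability_of_neighbouring_operators}, applied with \(n = m-1\), gives \(A_{m-1} \leq C A_m\) in the sense of quadratic forms. Hence each summand is at most \(C \langle A_m x_m, x_m \rangle_{\mathcal{K}}\), and therefore \(\Vert Tx \Vert_{\mathcal{H}_A}^2 \leq C \Vert x \Vert_{\mathcal{H}_A}^2\). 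In particular \(Tx \in \mathcal{H}_A\), and \(\Vert T \Vert \leq \sqrt{C}\).

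The argument for \(S\) is symmetric:
\begin{align*}
    \Vert Sx \Vert_{\mathcal{H}_A}^2
    = \sum_{n \in \Z} \langle A_n x_{n-1}, x_{n-1} \rangle_{\mathcal{K}}
    = \sum_{m \in \Z} \langle A_{m+1} x_m, x_m \rangle_{\mathcal{K}}
    \leq C \Vert x \Vert_{\mathcal{H}_A}^2,
\end{align*}
this time using the right inequality \(A_{m+1} \leq C A_m\). So \(\Vert S \Vert \leq \sqrt{C}\).

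It remains to check that \(TS = ST = \mathrm{id}_{\mathcal{H}_A}\), which holds coordinatewise. So \(T\) is bounded and boundedly invertible.

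There is no real obstacle. The only points needing care are interpreting the operator inequalities as inequalities of quadratic forms, \(\langle A_{m\pm1} v, v\rangle_{\mathcal{K}} \leq C \langle A_m v, v\rangle_{\mathcal{K}}\) for all \(v \in \mathcal{K}\), and justifying the re-indexing of the sums. The re-indexing is legitimate because all terms are non-negative, since \(A_n \geq c_n > 0\).
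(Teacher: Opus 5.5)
Your proof is correct. The paper states this lemma without proof, and your argument is exactly the routine verification it leaves to the reader: re-index the norm sums, then use \(A_{m-1} \leq C A_m\) for \(T\) and \(A_{m+1} \leq C A_m\) for the right-shift \(S = T^{-1}\), which gives \(\Vert T \Vert, \Vert T^{-1} \Vert \leq \sqrt{C}\).
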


Now, let us think about how (formal) eigenvectors of \(T\) have to look like, so that we can later avoid having them as elements of \((\mathcal{H}_A, \langle \cdot, \cdot \rangle_{\mathcal{H}_A})\) by tuning \(A\) accordingly.
\begin{lemma}
    Suppose \(x \in \mathcal{K}^{\Z} \setminus\{0\}\) is such that \(T x = \lambda x\) for some \(\lambda \in \C\setminus\{0\}\).
    Then, \(x_n = \lambda^n x_0\) for all \(n \in \Z\).
\end{lemma}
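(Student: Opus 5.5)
The statement is elementary: the eigenvalue equation for the bilateral left-shift is a first-order recursion along \(\Z\), and I will solve it in both directions starting from the index \(0\). Writing out \(Tx = \lambda x\) coordinatewise gives
\begin{align*}
    x_{n+1} = (Tx)_n = \lambda x_n, \quad n \in \Z,
\end{align*}
an identity in \(\mathcal{K}\) for every \(n\). Note that this is purely algebraic and uses only the definition of \(T\) on \(\mathcal{K}^\Z\); no membership in \(\mathcal{H}_A\) and no properties of \(A\) are needed.

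For the forward direction I will show \(x_n = \lambda^n x_0\) for \(n \geq 0\) by induction on \(n\). The case \(n = 0\) is trivial. If the claim holds for \(n\), then \(x_{n+1} = \lambda x_n = \lambda^{n+1} x_0\).

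For the backward direction I will use \(\lambda \neq 0\) to rewrite the recursion as \(x_n = \lambda^{-1} x_{n+1}\). I then induct downward: assuming \(x_{-m} = \lambda^{-m} x_0\) for some \(m \geq 0\), we get \(x_{-m-1} = \lambda^{-1} x_{-m} = \lambda^{-m-1} x_0\). Combining the two inductions gives \(x_n = \lambda^n x_0\) for all \(n \in \Z\).

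There is no real obstacle here. The only point needing care is that dividing by \(\lambda\) is exactly where the hypothesis \(\lambda \neq 0\) is used, and this is what makes the backward recursion possible. As a by-product, \(x \neq 0\) forces \(x_0 \neq 0\), since otherwise every coordinate would vanish. So any formal eigenvector is determined by a nonzero \(x_0 \in \mathcal{K}\), and its \(\mathcal{H}_A\)-norm is \(\sum_n \vert\lambda\vert^{2n} \langle A_n x_0, x_0\rangle_{\mathcal{K}}\). This is the quantity one will later arrange to diverge by tuning \(A\).
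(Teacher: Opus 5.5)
Your proof is correct: the paper states this lemma without writing out a proof, and your argument (reading \(Tx = \lambda x\) coordinatewise as \(x_{n+1} = \lambda x_n\), inducting forward, and inducting backward using \(\lambda \neq 0\)) is exactly the immediate argument it relies on. Your closing observation that \(\Vert x\Vert_{\mathcal{H}_A}^2 = \sum_n \vert\lambda\vert^{2n}\langle A_n x_0, x_0\rangle_{\mathcal{K}}\) is also how the paper goes on to use the lemma in the subsequent lower bound for eigenvector norms.
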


On the other hand, we want to have that the (formal) process \(X\) defined in \eqref{equation:definition_formal_process} is almost-surely an element of \(\mathcal{H}_A\). This competition is something we have to balance later.

Let us first, try to think of simple operators \(A_n\) on \(\mathcal{K}\) that we can control easily enough to have the right properties.
The simplest thing that comes to mind are of course rank-one operators, i.e.\ given a bilateral sequence \((v_n)_{n \in \Z}\) of vectors in \(\mathcal{K}\) with \(\Vert v_n \Vert_{\mathcal{K}} \leq 1\), \(n \in \Z\), we can try and define
\begin{align*}
    A_n := \langle \cdot, v_n \rangle_{\mathcal{K}} \, v_n.
\end{align*}
Unfortunately, thinking of a vector that is orthogonal to \(v_n\) but not \(v_{n+1}\), we see that this will never give the comparability of neighbours \eqref{equation:comparability_of_neighbouring_operators} if \((v_n)_{n \in \Z}\) is not constant. Hence, we think of the exponential smearing
\begin{align*}
    A_n := \sum_{m \in \Z} \eta(n-m) \,\langle \cdot, v_m \rangle_{\mathcal{K}} \, v_m
\end{align*} with \(\eta(n) := \e^{- \vert n \vert}\), \(n \in \Z\), that clearly has the comparability of neighbours \eqref{equation:comparability_of_neighbouring_operators} because \(\frac{\eta(n+1)}{\eta(n)} \sim 1\).
Still, there is the problem that \(A_n\) may not be strictly positive and hence not boundedly invertible. To remedy this, we add some \emph{background noise} and finally set
\begin{align}\label{equation:fitting_definition_A_n}
    A_n := A_n^{\mathrm{cmp}} + b_n,
\end{align} \(b_n > 0\), with the compact part
\begin{align}
    A_n^{\mathrm{cmp}}
    := \sum_{m \in \Z} \eta(n-m) \,\langle \cdot, v_m \rangle_{\mathcal{K}} \, v_m
\end{align} from above, choosing the bilateral sequence \((v_n)_{n \in \Z}\) of vectors in \(\mathcal{K}\) with \(\Vert v_n \Vert_{\mathcal{K}} \leq 1\), \(n \in \Z\), and the bilateral sequence of strictly positive numbers \((b_n)_{n \in \Z}\) later.

We can now use the simple form of \(T\)-eigenvectors to give the following simple lower bound of their \(\mathcal{H}_A\)-norms.
\begin{lemma}
    Suppose \(x \in \mathcal{K}^\Z \setminus\{0\}\) is such that \(T x = \lambda x\) for some \(\lambda \in \C\setminus\{0\}\).
    Then,
    \begin{align}\label{equation:lower_bound_H_A_norm_eigenvectors}
        \Vert x \Vert_{\mathcal{H}_A}^2
        \gtrsim \sum_{n \in \Z} \vert\lambda\vert^{2n} \vert \langle x_0,  v_n \rangle_{\mathcal{K}}\vert^2
        \,+\, \Vert x_0\Vert_{\mathcal{K}}^2 \sum_{n \in \Z} b_n \vert\lambda\vert^{2n}.
    \end{align}
\end{lemma}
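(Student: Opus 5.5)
By the preceding lemma on the form of eigenvectors, we know \(x_n = \lambda^n x_0\) for all \(n \in \Z\). The plan is to substitute this into the definition of the \(\mathcal{H}_A\)-norm with \(A_n = A_n^{\mathrm{cmp}} + b_n\). Since both parts are nonnegative, each contributes separately to a lower bound. Concretely,
\begin{align*}
    \Vert x \Vert_{\mathcal{H}_A}^2
    = \sum_{n \in \Z} \vert\lambda\vert^{2n} \langle A_n^{\mathrm{cmp}} x_0, x_0\rangle_{\mathcal{K}}
    + \Vert x_0\Vert_{\mathcal{K}}^2 \sum_{n \in \Z} b_n \vert\lambda\vert^{2n},
\end{align*}
where both sums consist of nonnegative terms, so they may be rearranged freely (values \(+\infty\) allowed). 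The second term is already exactly the second term in \eqref{equation:lower_bound_H_A_norm_eigenvectors}. It therefore remains to bound the compact part from below.

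For the compact part we write \(\langle A_n^{\mathrm{cmp}} x_0, x_0\rangle_{\mathcal{K}} = \sum_{m \in \Z} \e^{-\vert n-m\vert} \vert\langle x_0, v_m\rangle_{\mathcal{K}}\vert^2\). This sum has nonnegative terms, so we may drop all terms except the diagonal one \(m = n\). That gives \(\langle A_n^{\mathrm{cmp}} x_0, x_0\rangle_{\mathcal{K}} \geq \vert\langle x_0, v_n\rangle_{\mathcal{K}}\vert^2\). Multiplying by \(\vert\lambda\vert^{2n}\) and summing over \(n\) yields the first term of \eqref{equation:lower_bound_H_A_norm_eigenvectors}, in fact with implicit constant \(1\).

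The only points requiring care are minor. First, \(x\) is merely in \(\mathcal{K}^\Z\), so we interpret \(\Vert x\Vert_{\mathcal{H}_A}^2\) as the possibly infinite series \(\sum_n \langle A_n x_n, x_n\rangle_{\mathcal{K}}\). Its terms are nonnegative, which justifies splitting it and discarding terms (Tonelli for series). Second, we need the series defining \(A_n^{\mathrm{cmp}}\) to converge as a bounded operator. This holds since \(\Vert v_m\Vert_{\mathcal{K}} \leq 1\) and \(\eta\) is summable, so that \(\langle A_n^{\mathrm{cmp}} y, y\rangle_{\mathcal{K}}\) equals the stated sum. I do not expect a genuine obstacle; the step most worth checking is this convergence and nonnegativity bookkeeping.
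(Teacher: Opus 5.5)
Your proof is correct and is the direct computation the paper intends: the paper states this lemma without a written proof, as an immediate consequence of \(x_n = \lambda^n x_0\), splitting \(A_n = A_n^{\mathrm{cmp}} + b_n\) into nonnegative parts, and keeping only the diagonal term \(m = n\) (where \(\eta(0) = 1\)). Your handling of nonnegativity, of possibly infinite sums, and of the operator-norm convergence of \(A_n^{\mathrm{cmp}}\) is sound, and you even get the bound with implicit constant \(1\).
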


Similarly, it is easy to abstractly bound the \(\mathcal{H}_A\)-norms for the (formal) process \(X(\theta)\) from \eqref{equation:definition_formal_process} as follows.
\begin{lemma}
    We have
    \begin{align}\label{equation:abstract_bounds_H_A_norm_of_process}
        \Vert X(\theta) \Vert_{\mathcal{H}_A}^2
        \sim \sum_{n \in \Z} \eta(n) \Big( \sum_{m \in \Z} \vert \langle f(\theta + (n+m) \alpha), v_m \rangle_{\mathcal{K}} \vert^2 \Big) + \sum_{n \in \Z} b_n \Vert f(\theta + \alpha n) \Vert_{\mathcal{K}}^2.
    \end{align}
\end{lemma}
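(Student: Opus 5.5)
We expand the \(\mathcal{H}_A\)-norm from Definition~\ref{definition:abstract_infinite_tape}, with \(A_n\) as in \eqref{equation:fitting_definition_A_n}, and insert \([X(\theta)]_n = f(\theta + n\alpha)\) from \eqref{equation:definition_formal_process}. For any \(x \in \mathcal{K}^\Z\) we have
\begin{align*}
    \Vert x \Vert_{\mathcal{H}_A}^2
    = \sum_{n \in \Z} \langle A_n^{\mathrm{cmp}} x_n, x_n \rangle_{\mathcal{K}} + \sum_{n \in \Z} b_n \Vert x_n \Vert_{\mathcal{K}}^2
    = \sum_{n \in \Z} \sum_{m \in \Z} \eta(n-m) \vert \langle x_n, v_m \rangle_{\mathcal{K}} \vert^2 + \sum_{n \in \Z} b_n \Vert x_n \Vert_{\mathcal{K}}^2 .
\end{align*}
This holds as an identity in \([0,\infty]\). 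All terms are non-negative, so the double sum may be rearranged freely by Tonelli.

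The background-noise part, with \(x_n = f(\theta + n\alpha)\), is literally the second summand in \eqref{equation:abstract_bounds_H_A_norm_of_process}. For the compact part we reindex. We set \(k := n - m\), so \(n = m + k\), and the double sum becomes
\begin{align*}
    \sum_{k \in \Z} \eta(k) \sum_{m \in \Z} \vert \langle f(\theta + (m+k)\alpha), v_m \rangle_{\mathcal{K}} \vert^2 .
\end{align*}
After renaming \(k\) as \(n\), this is exactly the first summand in \eqref{equation:abstract_bounds_H_A_norm_of_process}.

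So \(\sim\) in fact holds with equality, with constants \(1\). The only care needed is to justify the interchange of summation, which Tonelli provides, and to note that \(\eta(n) = \e^{-|n|}\) is symmetric; otherwise the reindexing would produce \(\eta(-n)\). There is no real obstacle here. If the paper's \(\sim\) is meant to absorb some normalization of \(\eta\), that only changes harmless constants.
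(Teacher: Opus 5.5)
Your proposal is correct and is exactly the direct computation the paper has in mind: the paper states this lemma without proof as an easy consequence of expanding $\langle A_n x_n, x_n\rangle_{\mathcal{K}}$ and inserting $[X(\theta)]_n = f(\theta+n\alpha)$, and your Tonelli reindexing indeed gives equality with constant $1$. One small remark is that with your substitution $k = n-m$ you get $\eta(k)$ directly, so the symmetry of $\eta$ is not actually needed.
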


\subsubsection{Choosing a good transverse Hilbert space \(\mathcal{K}\) and operators \(A_n\)}

Recall that we wanted to choose \((A_n)_{n \in \Z}\), in turn \((v_n)_{n \in \Z}\) and \((b_n)_{n \in \Z}\), such that simultaneously eigenvectors of \(T\) can not belong to \(\mathcal{H}_A\) and \(X(\theta)\) is \(\d\theta\)-almost-surely an element of \(\mathcal{H}_A\).

We can already read off \eqref{equation:lower_bound_H_A_norm_eigenvectors} that it would be enough to exclude eigenvectors of \(T\) from \(\mathcal{H}_A\) by
\begin{enumerate}
    \item Choosing \(b_n\) with strictly subexponential decay, e.g.\ 
        \begin{align}\label{equation:choice_subexponential_background_noise}
            b_n := \e^{-\sqrt{1 + \vert n \vert}},
        \end{align} to exclude eigenvalues \(\lambda\) with \(\vert\lambda\vert \neq 1\),
    \item choosing \((v_n)_{n \in \Z}\) as \emph{anti-frame} with the property that
        \begin{align}\label{equation:anti_frame_property}
            \sum_{n \in \Z} \vert \langle x_0,  v_n \rangle_{\mathcal{K}}\vert^2
            = \infty
        \end{align} for all \(x_0 \in \mathcal{K}\setminus\{0\}\), to exclude eigenvalues \(\lambda\) with \(\vert\lambda\vert = 1\).
\end{enumerate} 
This is summarized in
\begin{lemma}\label{lemma:abstract_no_eigenvectors_in_H_A}
    Suppose that \((A_n)_{n \in \Z}\) is given as in \eqref{equation:fitting_definition_A_n}, where \((b_n)_{n \in \Z}\) is chosen as in \eqref{equation:choice_subexponential_background_noise} and \((v_n)_{n \in \Z}\) are such that the anti-frame property \eqref{equation:anti_frame_property} holds.
    Then \(T\) doesn't have any eigenvectors in \(\mathcal{H}_A\).
\end{lemma}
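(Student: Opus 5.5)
We argue by contradiction. Suppose \(x \in \mathcal{H}_A\setminus\{0\}\) satisfies \(Tx = \lambda x\) for some \(\lambda \in \C\). The case \(\lambda = 0\) is excluded at once: by Lemma~\ref{lemma:conditions_for_bilat_left_shift_bounded_boundedly_invertible} the shift \(T\) is injective on \(\mathcal{H}_A\). (Alternatively, \(Tx = 0\) means \(x_{n+1} = 0\) for all \(n\), so \(x = 0\).) The comparability \eqref{equation:comparability_of_neighbouring_operators} does hold for \(A_n\) from \eqref{equation:fitting_definition_A_n} with \(b_n\) from \eqref{equation:choice_subexponential_background_noise}, since \(\eta(n+1-m)/\eta(n-m) \in [\e^{-1}, \e]\) and \(b_{n+1}/b_n\) is bounded above and below. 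So we may assume \(\lambda \neq 0\).

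For \(\lambda \neq 0\), the earlier lemma on the form of eigenvectors gives \(x_n = \lambda^n x_0\), and \(x_0 \neq 0\) because \(x \neq 0\). The lower bound \eqref{equation:lower_bound_H_A_norm_eigenvectors} then yields
\begin{align*}
    \infty > \Vert x \Vert_{\mathcal{H}_A}^2
    \gtrsim \sum_{n \in \Z} \vert\lambda\vert^{2n} \vert \langle x_0, v_n \rangle_{\mathcal{K}}\vert^2
    + \Vert x_0\Vert_{\mathcal{K}}^2 \sum_{n \in \Z} b_n \vert\lambda\vert^{2n}.
\end{align*}
Both sums are non-negative, so it suffices to show that one of them diverges. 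We split according to \(\vert\lambda\vert\).

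If \(\vert\lambda\vert \neq 1\), look at the second sum. For \(\vert\lambda\vert > 1\) write \(\vert\lambda\vert^{2} = \e^{c}\) with \(c > 0\); the terms with \(n \to +\infty\) are \(\e^{cn - \sqrt{1+n}} \to \infty\), so the series diverges. For \(\vert\lambda\vert < 1\) the same happens as \(n \to -\infty\). Since \(\Vert x_0\Vert_{\mathcal{K}} > 0\), the right-hand side is infinite. This is the only place where the strictly subexponential decay of \(b_n\) is used.

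If \(\vert\lambda\vert = 1\), the first sum equals \(\sum_{n} \vert\langle x_0, v_n\rangle_{\mathcal{K}}\vert^2\), which is infinite by the anti-frame property \eqref{equation:anti_frame_property} because \(x_0 \neq 0\).

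In either case we contradict \(x \in \mathcal{H}_A\), so \(T\) has no eigenvectors in \(\mathcal{H}_A\). There is no genuine obstacle once \eqref{equation:lower_bound_H_A_norm_eigenvectors} is available. The only points needing care are that the implicit constant in \(\gtrsim\) is uniform and positive, and that the case \(\lambda = 0\) is handled separately, since the form \(x_n = \lambda^n x_0\) was only established for \(\lambda \neq 0\).
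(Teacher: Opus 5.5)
Your proof is correct and follows the paper's argument. You insert \(x_n = \lambda^n x_0\) into the lower bound \eqref{equation:lower_bound_H_A_norm_eigenvectors}, use the strictly subexponential decay of \(b_n\) to rule out \(\vert\lambda\vert \neq 1\), and use the anti-frame property to rule out \(\vert\lambda\vert = 1\); your separate treatment of \(\lambda = 0\), which the paper leaves implicit, is a good bit of extra care.
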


We will comment more on specific choices of the anti-frame later. To ensure \(\mathrm{d}\theta\)-almost-sure membership of \(X(\theta)\) in \(\mathcal{H}_A\), we already gave the computation \eqref{equation:abstract_bounds_H_A_norm_of_process}, but now we have to specify the transverse Hilbert space \(\mathcal{K}\) and the map \(f \colon \mathbb{S}^1 \to \mathcal{K}\) which are vital for that.

One natural choice of the transverse Hilbert space \(\mathcal{K}\) together with a map \(f \colon \mathbb{S}^1 \to \mathcal{K}\) is the well-known Sobolev space \(H^1(\mathbb{S}^1 \to \mathbb{C})\) of weakly differentiable functions \(x \colon \mathbb{S}^1 \to \mathbb{C}\) with the inner product
\begin{align}
    \langle x, y \rangle_{\mathcal{K}}
    := \int_{\mathbb{S}^1} \, \big[x(\theta) \overline{y}(\theta) + x'(\theta) \overline{y'}(\theta)  \big]\, \mathrm{d}\theta,
\end{align} because it is a reproducing kernel Hilbert space with reproducing kernel
\begin{align}
    f(\theta) 
    := k_\theta
    = \frac{1}{2\pi} \sum_{l \in \Z} \frac{\e^{i l(\cdot - \theta)}}{1 + l^2}.
\end{align}

Let us already remark here, that we will be able to resolve the competition between having no eigenvectors of \(T\) in \(\mathcal{H}_A\) and having \(\d\theta\)-almost-sure membership of \(X(\theta)\) in \(\mathcal{H}_A\) by noticing the following (non-)invariance. The anti-frame property \eqref{equation:anti_frame_property} does not really depend on the \emph{geometry} of the infinite-tape, i.e.\ \emph{where} on the line \(\Z\) we place each member of \((v_n)_{n \in \Z}\), but the abstract bound \eqref{equation:abstract_bounds_H_A_norm_of_process} now gives in our concrete setting
\begin{align}\label{equation:concrete_bounds_H_A_norm_of_process}
    \Vert X(\theta) \Vert_{\mathcal{H}_A}^2
    \lesssim 1 + \sum_{n \in \Z} \e^{-\vert n\vert} \Big( \sum_{m \in \Z} \vert v_m(\theta + \alpha (n+m)) \vert^2 \Big).
\end{align}

Now, we will construct the anti-frame \((v_n)_{n \in \Z}\) from the following family of \emph{hat functions} in \(\mathcal{K}\), which is flexible enough and also easy to do computations with.
\begin{definition}[Family of hat functions]
    Let \(r \in \mathbb{N}\) and consider the uniform mesh \(\mathfrak{M}_r := \{2 \pi k h_r \,\colon\, 0 \leq k < M_r\}\) of \([0, 2 \pi)\), with \(h_r := 8^{-r}\), \(M_r := h_r^{-1}\). Denote by \(V_r\) the space of continuous (periodic) \(H^1(\mathbb{S}^1 \to \mathbb{C})\)-functions which are affine linear on every interval \([2\pi k h_r, 2 \pi (k+1) h_r)\) associated with the mesh \(\mathfrak{M}_r\).
    A simple basis of \(V_r\) are the (periodic) affine linear \emph{hat functions} \(\phi_{r, k}\) associated with the mesh \(\mathfrak{M}_r\), which have \(\phi_{r, k}(2\pi j h_r) = \mathbf{1}_{j = k}\), \(0 \leq k < M_r\). 
    Further, we consider the normalized
    \begin{align}
        u_{r, k} := \frac{\phi_{r, k}}{\Vert \phi_{r, k} \Vert_{\mathcal{K}}}
    \end{align} for \(r \in \mathbb{N}\) and \(0 \leq k < M_r\).
\end{definition}

We give the following simple anti-frame-bound for \((u_{r, k})_{0 \leq < M_r}\) with fixed mesh-size parameter \(r \in \N\), which follows easily by explicit computations, using the triangular shape of the hat functions \(\phi_{r, k}\) and that they form a basis of \(V_r\) defined above.
\begin{lemma}
    Let \(x \in \mathcal{K}\) and \(P_{V_r}\) be the orthogonal projection on \(V_r\), \(r \in \mathbb{N}\).
    Then,
    \begin{align}\label{equation:anti_frame_bound}
        \sum_{k = 0}^{M_r - 1} \vert \langle x, u_{r, k} \rangle_{\mathcal{K}} \vert^2
        \geq \frac{h_r^2}{3} \Vert P_{V_r} x\Vert_{\mathcal{K}}^2.
    \end{align}
\end{lemma}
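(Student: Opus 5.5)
The plan is to reduce \eqref{equation:anti_frame_bound} to a finite-dimensional eigenvalue estimate for the Gram matrix of the hat functions. Since each \(u_{r,k}\) lies in \(V_r\), we have \(\langle x, u_{r,k}\rangle_{\mathcal{K}} = \langle P_{V_r}x, u_{r,k}\rangle_{\mathcal{K}}\). It therefore suffices to prove the bound for \(y := P_{V_r}x \in V_r\). Because the \(\phi_{r,k}\) form a basis of \(V_r\), we write \(y = \sum_{k} c_k \phi_{r,k}\) with \(c = (c_k)_{0\le k<M_r} \in \C^{M_r}\). Let \(G = (\langle \phi_{r,j}, \phi_{r,k}\rangle_{\mathcal{K}})_{j,k}\) be the Gram matrix, so that \(G_{kj} = \langle \phi_{r,j}, \phi_{r,k}\rangle_{\mathcal{K}}\). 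Then
\begin{align*}
    \Vert y\Vert_{\mathcal{K}}^2 = c^* G c,
    \qquad
    \langle y, \phi_{r,k}\rangle_{\mathcal{K}} = (Gc)_k .
\end{align*}

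Next I compute \(G\) explicitly. Put \(\delta := 2\pi h_r\) for the mesh width. By the triangular shape of the hat functions, the \(L^2\)-part (mass matrix) has diagonal entries \(2\delta/3\) and nearest-neighbour entries \(\delta/6\). The derivative part (stiffness matrix) has diagonal entries \(2/\delta\) and nearest-neighbour entries \(-1/\delta\). All other entries vanish, using periodicity and \(M_r \ge 8\), so the supports only meet for neighbours. Thus \(G\) is a real symmetric circulant matrix. Its eigenvalues are
\begin{align*}
    g(\omega) = \frac{\delta}{3}(2+\cos\omega) + \frac{2}{\delta}(1-\cos\omega), \qquad \omega \in \tfrac{2\pi}{M_r}\Z,
\end{align*}
and hence \(\lambda_{\min}(G) \ge \delta/3\). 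Since \(G\) is positive definite, we get \(\Vert Gc\Vert^2 = c^* G^2 c \ge \lambda_{\min}(G)\, c^* G c\). Therefore
\begin{align*}
    \sum_k \vert\langle y,\phi_{r,k}\rangle_{\mathcal{K}}\vert^2 \ge \frac{\delta}{3}\,\Vert y\Vert_{\mathcal{K}}^2 .
\end{align*}

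Finally I pass to the normalized \(u_{r,k}\). All hats have the same norm \(\Vert\phi_{r,k}\Vert_{\mathcal{K}}^2 = G_{kk} = 2\delta/3 + 2/\delta\). Dividing by this gives
\begin{align*}
    \sum_k \vert\langle y,u_{r,k}\rangle_{\mathcal{K}}\vert^2 \ge \frac{\delta/3}{2\delta/3+2/\delta}\Vert y\Vert_{\mathcal{K}}^2 = \frac{2\pi^2 h_r^2}{4\pi^2h_r^2+3}\Vert y\Vert_{\mathcal{K}}^2 .
\end{align*}
This is at least \(\frac{h_r^2}{3}\Vert y\Vert_{\mathcal{K}}^2\), because \(4\pi^2 h_r^2 + 3 \le 6\pi^2\) for \(h_r \le 1/8\).

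The only step needing care is the explicit Gram matrix entries together with the uniform lower bound on the circulant symbol. The constants shift slightly if the measure \(\d\theta\) is normalized differently, but the margin between \(2\pi^2\) and \(1\) absorbs this. The rest is routine.
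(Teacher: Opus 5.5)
Your proof is correct and is essentially the computation the paper only sketches (``explicit computations, using the triangular shape of the hat functions and that they form a basis of \(V_r\)''). Your steps carry that sketch out in full: reducing to \(P_{V_r}x\), computing the circulant mass/stiffness Gram matrix \(G\), using \(c^*G^2c \ge \lambda_{\min}(G)\,c^*Gc\) with \(\lambda_{\min}(G)\ge \delta/3\), and normalizing by \(\Vert\phi_{r,k}\Vert_{\mathcal{K}}^2 = 2\delta/3+2/\delta\). The final constant has ample room; in fact \(\lambda_{\min}(G)=g(0)=\delta\), since \(g\) is decreasing in \(\cos\omega\) whenever \(\delta^2<6\).
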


It is now easy to see from \eqref{equation:anti_frame_bound}, that any family \((v_n)_{n \in \Z}\) that contains at least \(N_r := \lceil h_r^{-2} \rceil\)-many copies of each \(u_{k, r}\) will have the anti-frame property \eqref{equation:anti_frame_property}.
It remains to see that we can arrange \((v_n)_{n \in \Z}\) on the line \(\Z\) in such a way that the upper bound \eqref{equation:concrete_bounds_H_A_norm_of_process} will be finite \(\d\theta\)-a.s.
\begin{lemma}\label{lemma:fitting_arrangement_of_anti_frame_on_line_Z}
    Assume that \(\alpha \in \mathbb{R}\) is chosen such that \(\frac{\alpha}{2\pi}\) is irrational.
    Let \(\mathcal{I} := \{(r, k, l) \,\colon\, r \in \N, 0 \leq k < M_r, 1 \leq l \leq N_r\}\). Then there exists an injection \(j \colon \mathcal{I} \to \mathbb{Z}\) such that the family \((v_n)_{n \in \Z}\) given by
    \begin{align}\label{equation:fitting_choice_anti_frame_v_n}
        v_n := \begin{cases}
            u_{r, k}, &\text{if there is some } l \text{ with } (r, k, l) = j^{-1}(n)  , \\
            0, & \text{otherwise},
        \end{cases}
    \end{align} has the property that it \(\d\theta\)-almost-surely holds
    \begin{align}\label{equation:finiteness_phase_load}
         \sum_{n \in \Z} \e^{-\vert n\vert} \Big( \sum_{m \in \Z} \vert v_m(\theta + \alpha (n+m)) \vert^2 \Big)
         < \infty.
    \end{align}
\end{lemma}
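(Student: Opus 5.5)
The plan is to exploit the fact that each hat function \(u_{r,k}\) is supported on an arc of length \(4\pi h_r\) around the mesh point \(2\pi k h_r\), and that it is bounded pointwise by \(\Vert u_{r,k}\Vert_\infty \le \Vert \phi_{r,k}\Vert_{\mathcal{K}}^{-1} \lesssim h_r^{1/2}\). This holds because the \(H^1\)-norm of \(\phi_{r,k}\) is dominated by its derivative part, of order \((h_r \cdot h_r^{-2})^{1/2} = h_r^{-1/2}\). The strategy is a Borel--Cantelli argument. Since the sum over \(n\) in \eqref{equation:finiteness_phase_load} carries the weight \(\e^{-|n|}\), it suffices to show that for a.e.\ \(\theta\) the inner sums \(S_n(\theta) := \sum_m |v_m(\theta+\alpha(n+m))|^2\) are finite and grow at most subexponentially in \(n\). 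Equivalently, since \(S_n(\theta) = S_0(\theta + n\alpha)\), I will control \(S_0\) along the orbit \(\theta + n\alpha\).

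First, I compute the expectation. By rotation invariance of \(\d\theta\),
\[
\int_{\mathbb{S}^1} S_0(\theta)\,\d\theta = \sum_{(r,k,l)\in\mathcal{I}} \Vert u_{r,k}\Vert_{L^2}^2 \lesssim \sum_r M_r N_r h_r^{3},
\]
using \(\Vert u_{r,k}\Vert_{L^2}^2 \lesssim h_r \cdot h_r \cdot h_r\), i.e.\ (sup\({}^2\)) times (support length). With \(M_r = h_r^{-1}\) and \(N_r \approx h_r^{-2}\), this term is of order \(1\) per \(r\), so the series diverges. A naive placement therefore fails, and the geometry given by \(j\) must be used. This is the main obstacle.

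The fix I intend is to choose \(j\) so that copies with large \(r\) are placed far out on the tape: block \(r\) is put at positions \(|n| \gtrsim R_r\), with \(R_r\) growing fast, say \(R_r \ge 2^{r}\) or larger. I then bound the full double sum directly. Substituting \(p = n+m\) and integrating gives
\[
\int \sum_n \e^{-|n|} S_n(\theta)\,\d\theta = \sum_n \e^{-|n|} \sum_m \Vert v_m\Vert_{L^2}^2 ,
\]
which is still of order \(\sum_r 1\) and hence infinite. So integrability alone will not suffice, and I will instead work pointwise. For a fixed block \(r\), placed at the consecutive integers \(m \in J_r\) with \(|J_r| = M_r N_r\), the contribution is at most \(\sup \cdot\) the number of \(m \in J_r\) with \(\theta + \alpha(n+m)\) in the support of \(u_{r,j(m)}\).

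To make this count small, I arrange the \(k\)-ordering within the block to track the orbit. I place \(u_{r,k}\) at a position \(m\) for which \(\theta_* + \alpha m\) is far, at distance \(\ge 4\pi h_r\), from \(2\pi k h_r\) for a reference phase, so that the support is avoided. By equidistribution of the irrational rotation, for a fixed \(\theta\) only a proportion of order \(h_r\) of the positions in \(J_r\) can hit their supports. This gives a contribution of roughly \(h_r \cdot M_r N_r \cdot h_r = h_r^{-1}\) per block, which is still too big. I therefore expect the real choice to require that, within each block, the placement \(m \mapsto k\) is made so that \(\theta + \alpha m\) lies outside \(\operatorname{supp} u_{r,k}\) for all \(\theta\) outside an exceptional set \(E_r\) of measure \(\le 2^{-r}\). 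This is possible because one can choose \(m\) such that \(\alpha m\) lies close to \(2\pi k h_r + \pi\), making the supports miss everything except a thin set. The same shift \(n\) applied to all \(m\) in the block moves only the base point, and the weights \(\e^{-|n|}\) localize \(n\) to a bounded range modulo a summable tail. Borel--Cantelli over \(r\), together with the countable union over the relevant shifts, then shows that a.e.\ \(\theta\) lies in only finitely many \(E_r\). The finitely many remaining blocks each give a finite contribution, which yields \eqref{equation:finiteness_phase_load}.
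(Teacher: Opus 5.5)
Your final placement idea is the right one and is essentially the paper's. The paper chooses \(j(r,k,l)\) so that \(\mathrm{supp}(u_{r,k}) \subset C_r + \alpha j(r,k,l)\) for disjoint arcs \(C_r\) of length \(2\pi 2^{-r}\); these play the role of your exceptional sets \(E_r\), and your arcs around \(\pi\) would work just as well. You should say explicitly that infinitely many admissible \(m\) exist for each \((r,k)\), so that \(j\) can be made injective greedily.

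The gap is in the last step. At the outset you correctly say you need \(S_n(\theta) = S_0(\theta+n\alpha)\) to grow at most subexponentially in \(|n|\). But Borel--Cantelli over \(r\) plus a countable union over \(n\) only gives this: for a.e.\ \(\theta\) and each \(n\), the point \(\theta+n\alpha\) lies in finitely many \(E_r\). That means \(S_n(\theta)<\infty\) for every \(n\), with no bound in terms of \(n\). The irrational rotation is minimal, so the orbit \(\theta+n\alpha\) enters every \(E_r\) infinitely often. There block \(r\) can contribute up to order \(M_r N_r \Vert u_{r,k}\Vert_\infty^2 \sim N_r \sim 64^r\). Hence the set of contributing blocks is not finite uniformly in \(n\). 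Your claim that the weights \(\e^{-|n|}\) ``localize \(n\) to a bounded range modulo a summable tail'' presupposes exactly the tail bound that has to be proved. Nothing in your argument compares \(|E_r|\) with \(\log N_r\), and that comparison is the real content. One must show that, up to finitely many exceptions, the orbit does not hit \(E_r\) at times \(|n| \lesssim \log N_r\).

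The missing tool is Borel--Cantelli over \(n\), which is how the paper's proof starts. The map \(\theta\mapsto\theta+n\alpha\) preserves \(\d\theta\), so \(|\{S_n > \e^{|n|/2}\}| = |\{S_0 > \e^{|n|/2}\}|\). By the layer-cake formula, \(\sum_{n} |\{S_0 > \e^{|n|/2}\}| < \infty\) is equivalent to \(\int_{\mathbb{S}^1}\log(1+S_0)\,\d\theta<\infty\). Given this, a.s.\ \(S_n(\theta)\le \e^{|n|/2}\) for all large \(|n|\). Together with \(S_n(\theta)<\infty\) for the remaining finitely many \(n\), the weighted sum converges.

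With this reduction your construction closes at once. First, \(S_0(\theta) \lesssim \sum_{r \colon \theta\in E_r} 64^r\). Hence \(\log(1+S_0(\theta)) \lesssim 1+\max\{r \colon \theta\in E_r\} \le 1+\sum_r r\mathbf{1}_{E_r}(\theta)\). So \(\int\log(1+S_0)\,\d\theta \lesssim 1+\sum_r r|E_r|<\infty\) whenever \(|E_r|\le 2^{-r}\).

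A minor slip earlier: \(\Vert u_{r,k}\Vert_{L^2}^2 \sim h_r\cdot h_r = h_r^2\), not \(h_r^3\). So \(\int S_0\,\d\theta\) diverges like \(\sum_r h_r^{-1}\). Since this integral does not depend on \(j\), no arrangement makes \(S_0\) integrable, which is why only a logarithmic moment can work.
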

\begin{proof}
    We have to arrange \((v_n)_{n \in \Z}\) such that \eqref{equation:finiteness_phase_load} holds \(\d\theta\)-almost-surely. Noticing that the distribution of \(\sum_{m \in \Z} \vert v_m(\theta + \alpha (n+m)) \vert^2\) under \(\d\theta\) is independent of \(n\), it suffices, by the Borel-Cantelli lemma and the layer cake representation of expected values, to enforce
    \begin{align}\label{equation:log_moment_phase_load}
        \int_{\mathbb{S}^1} \, \log\Big(1  +   \sum_{m \in \Z} \vert v_m(\theta 
        + \alpha m ) \vert^2 \Big) \, \d\theta
        < \infty.
    \end{align} 
    This can be done by the following pointwise bounds.
    On the one hand, one can easily compute for the normalized hat functions \(u_{r, k}\), that
    \begin{align}
        \Vert u_{r, k} \Vert_\infty^2
        \leq \pi h_r.
    \end{align}
    On the other hand, we can decompose (up to countably many points) the unit circle \(\mathbb{S}^1 \cong [0, 2\pi)\) into disjoint arcs \(C_r = (2 \pi \cdot 2^{-r}, 2 \pi \cdot 2^{-r + 1})\) of length \(2 \pi \cdot 2^{-r}\), and then construct an injection \(j \colon \mathcal{I} \to \Z\) such that
    \begin{align}
        \mathrm{supp}(u_{r, k})
        \subset C_r + \alpha j(r, k, l)
    \end{align} for all \((r, k, l) \in \mathcal{I}\). This is possible because \(\{ j\alpha \colon j \in \Z\}\) meets every non-empty arc infinitely often by the irrationality of \(\alpha\), so that we can construct \(j\) greedily.

    Hence, we constructed \(j\) such that with \(v_n\) from \eqref{equation:fitting_choice_anti_frame_v_n}, that for fixed \(r \in \N\) and \(\theta \in C_r\) only the \(N_r\)-many copies of \((u_{r, k})_{0 \leq k < M_r}\) in \((v_n)_{n \in \Z}\) can contribute to the sum
    \begin{align}
        \sum_{m \in \Z} \vert v_m(\theta + \alpha m ) \vert^2
        \leq N_r M_r \sup_{0 \leq k < M_r} \Vert u_{r, k} \Vert_\infty^2
        \lesssim \pi N_r M_r h_r
        = \pi N_r \sim 64^r.
    \end{align} 
    Finally, it follows \eqref{equation:log_moment_phase_load} for our choice of \((v_n)_{n \in \Z}\) by
    \begin{align}
        \int_{\mathbb{S}^1} \, \log\Big(1  +   \sum_{m \in \Z} \vert v_m(\theta 
        + \alpha m ) \vert^2 \Big) \, \d\theta
        \leq \sum_{r \geq 1} \vert C_r\vert \log(1 + \pi N_r)
        \lesssim \sum_{r \geq 1} r 2^{-r} 
        < \infty.
    \end{align}
\end{proof}

From the preceding lemmas, we can easily construct the sought-after invariant measure on \(\mathcal{H}_A\).
\begin{lemma}\label{lemma:the_invariant_measure}
    Suppose that \((A_n)_{n \in \Z}\) is given as in \eqref{equation:fitting_definition_A_n}, where \((b_n)_{n \in \Z}\) is chosen as in \eqref{equation:choice_subexponential_background_noise} and \((v_n)_{n \in \Z}\) is arranged as in \eqref{equation:fitting_choice_anti_frame_v_n}.
    Then, there is a non-trivial \(T\)-invariant probability measure \(\varrho \neq \delta_0\) on \(\mathcal{H}_A\) (equipped with the Borel-\(\sigma\)-algebra).
\end{lemma}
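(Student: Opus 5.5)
Take \(\varrho\) to be the law of \(X(\theta)\), with \(\theta\) uniform on \([0,2\pi)\) and \([X(\theta)]_n = k_{\theta + n\alpha}\), where \(k\) is the reproducing kernel of \(\mathcal{K} = H^1(\mathbb{S}^1\to\C)\) and \(\alpha/(2\pi)\) is irrational as in Lemma~\ref{lemma:fitting_arrangement_of_anti_frame_on_line_Z}. The plan has four steps.

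\textbf{Step 1: \(X(\theta)\in\mathcal{H}_A\) for \(\d\theta\)-almost every \(\theta\).} I start from the abstract bound \eqref{equation:abstract_bounds_H_A_norm_of_process} with \(f(\theta)=k_\theta\).
\begin{itemize}
\item The reproducing property gives \(\langle k_{\theta+(n+m)\alpha}, v_m\rangle_{\mathcal{K}} = \overline{v_m(\theta+(n+m)\alpha)}\). Hence the first sum is exactly the left-hand side of \eqref{equation:finiteness_phase_load}.
\item \(\Vert k_\phi\Vert_{\mathcal{K}}^2 = k_\phi(\phi) = \frac{1}{2\pi}\sum_l (1+l^2)^{-1}\) is a finite constant independent of \(\phi\). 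So the background term is at most a constant times \(\sum_n b_n < \infty\), by the subexponential choice \eqref{equation:choice_subexponential_background_noise}.
\end{itemize}
Lemma~\ref{lemma:fitting_arrangement_of_anti_frame_on_line_Z} then gives \(\Vert X(\theta)\Vert_{\mathcal{H}_A}<\infty\) off a null set \(N\). I redefine \(X:=0\) on \(N\). Since \(X(\theta+\alpha)\) is finite exactly when \(X(\theta)\) is, \(N\) is invariant under \(\theta\mapsto\theta+\alpha\), so I may work on the invariant full-measure set \(\mathbb{S}^1\setminus N\).

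\textbf{Step 2: measurability.} For each fixed \(y\) with finitely many nonzero coordinates, the map \(\theta\mapsto \langle X(\theta), y\rangle_{\mathcal{H}_A} = \sum_n \langle A_n k_{\theta+n\alpha}, y_n\rangle_{\mathcal{K}}\) is measurable. Each coordinate \(\theta\mapsto k_{\theta+n\alpha}\) is continuous into \(\mathcal{K}\), and \(A_n\) is bounded. Finitely supported sequences are dense in the separable space \(\mathcal{H}_A\), and \(A_n\) is bounded with \(A_n \geq b_n > 0\). So by Pettis' theorem, weak measurability gives strong measurability of \(X\colon \mathbb{S}^1\setminus N\to\mathcal{H}_A\). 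Therefore \(\varrho := X_*(\frac{\d\theta}{2\pi})\) is a Borel probability measure on \(\mathcal{H}_A\).

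\textbf{Step 3: invariance.} The computation before Definition~\ref{definition:abstract_infinite_tape} gives the pointwise identity \eqref{equation:pointwise_process_identity}, \(TX(\theta)=X(\theta+\alpha)\). For a bounded measurable \(g\), rotation invariance of Lebesgue measure then yields \(\int g\circ T \,\d\varrho = \int g(X(\theta+\alpha))\,\frac{\d\theta}{2\pi} = \int g\,\d\varrho\).

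\textbf{Step 4: non-triviality.} Every coordinate \(k_{\theta+n\alpha}\) is nonzero, since its norm squared is a positive constant. So \(X(\theta)\neq 0\) for all \(\theta\notin N\), and hence \(\varrho(\{0\})=0\), i.e. \(\varrho \neq \delta_0\).

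The main obstacle is Step 1, and it reduces to checking that \eqref{equation:abstract_bounds_H_A_norm_of_process} specialises correctly to \eqref{equation:concrete_bounds_H_A_norm_of_process}, i.e.\ the index bookkeeping \(m\) versus \(n+m\). For \(A_n^{\mathrm{cmp}}\) one has \(\langle A_n x_n,x_n\rangle = \sum_m \eta(n-m)\vert\langle x_n,v_m\rangle\vert^2\). Substituting \(m=n+m'\) and \(x_n = k_{\theta+n\alpha}\) produces, up to the symmetry of \(\eta\), \(\sum_{m'}\eta(m')\sum_n \vert v_{n+m'}(\theta+n\alpha)\vert^2\). Writing \(v_{n+m'}(\theta+n\alpha) = v_{n+m'}((\theta - m'\alpha)+(n+m')\alpha)\) turns this into the left-hand side of \eqref{equation:finiteness_phase_load} evaluated at the shifted phase \(\theta-m'\alpha\) rather than at \(\theta\). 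I would therefore rerun the Borel--Cantelli argument of Lemma~\ref{lemma:fitting_arrangement_of_anti_frame_on_line_Z} in this form. That argument only used that each shifted sum has the same distribution in \(\theta\) and a finite log-moment \eqref{equation:log_moment_phase_load}, so it gives almost-sure finiteness here as well.
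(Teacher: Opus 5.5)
Your proposal is correct and follows essentially the same route as the paper: push forward normalized $\d\theta$ under $X(\theta)$, set to $0$ on the rotation-invariant null set where $\Vert X(\theta)\Vert_{\mathcal{H}_A}=\infty$, with almost-sure finiteness coming from \eqref{equation:abstract_bounds_H_A_norm_of_process} and Lemma~\ref{lemma:fitting_arrangement_of_anti_frame_on_line_Z}; the paper gets strong measurability from the continuous truncations $X^{(N)}$ rather than from Pettis, which is an immaterial difference. Your ``main obstacle'' is not actually one, and no second Borel--Cantelli argument is needed: relabel $n=-m'$ in $\sum_{m'}\eta(m')\sum_m \vert v_m((\theta-m'\alpha)+m\alpha)\vert^2$ and use $\eta(-n)=\eta(n)$, and it is literally the left-hand side of \eqref{equation:finiteness_phase_load} at $\theta$, as your own first bullet already stated.
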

\begin{proof}
    Consider, as before, the function \(X \colon \mathbb{S}^1 \to \mathcal{K}^\Z\) given by
    \begin{align}
        [X(\theta)]_n
        = k_{\theta + \alpha n},
    \end{align} and also its truncations, for \(N \in \N\),
    \begin{align}
        [X^{(N)}(\theta)]_n
        := k_{\theta + \alpha n} \mathbf{1}_{\vert n\vert \leq N}, \quad n \in \Z,
    \end{align} which are continuous as maps \(\mathbb{S}^1 \to \mathcal{H}_A\).
    
    We can construct a strongly measurable version \(\widetilde{X}\) of \(X\) with values in \(\mathcal{H}_A\) by first defining 
    \begin{align}
        \Omega^*
        := \{\theta \in \mathbb{S}^1 \,\colon\, \Vert X(\theta) \Vert_{\mathcal{H}_A}^{2} = \sup_{N} \Vert X^{(N)}(\theta)\Vert_{\mathcal{H}_A}^2 < \infty \}
    \end{align} and then
    \begin{align}
        \widetilde{X}
        := X \mathbf{1}_{\Omega^*}.
    \end{align} 
    Indeed, by boundedness and bounded invertibility of \(T\), the set \(\Omega^*\) is rotation-invariant, so that we have \(T\widetilde{X}(\theta) = \widetilde{X}(\theta + \alpha)\) for all \(\theta \in \mathbb{S}^1\). Also, \(\Omega^{*}\) is a set of full \(\d\theta\)-measure by \eqref{equation:concrete_bounds_H_A_norm_of_process} and Lemma~\ref{lemma:fitting_arrangement_of_anti_frame_on_line_Z}.
    Putting everything together, the statement of this lemma follows by letting
    \begin{align}
        \varrho 
        := \frac{1}{\vert \mathbb{S}^1 \vert}\int_{\mathbb{S}^1} \, \delta_{\widetilde{X}(\theta)} \, \d\theta.
    \end{align}
\end{proof}

Let us finally gather concisely everything we introduced step-wise in a
\begin{proof}[Proof of Theorem~\ref{theorem:example}]
    We consider the Hilbert space \(\mathcal{H}_A\) as defined in Definition~\ref{definition:abstract_infinite_tape}. Herein, the operators \(A_n\) are chosen as in \eqref{equation:fitting_definition_A_n} with the background noise \(b_n\) given in \eqref{equation:choice_subexponential_background_noise}, and the compact part defined via the anti-frame \((v_n)_{n \in \Z}\) from \eqref{equation:fitting_choice_anti_frame_v_n}. The bilateral left-shift \(T\) is then a boundedly invertible linear operator on \(\mathcal{H}_A\) by Lemma~\ref{lemma:conditions_for_bilat_left_shift_bounded_boundedly_invertible}.
    A non-trivial \(T\)-invariant measure \(\varrho\) on \(\mathcal{H}_A\) is given in Lemma~\ref{lemma:the_invariant_measure}. Finally, Lemma~\ref{lemma:abstract_no_eigenvectors_in_H_A} shows together with the small observation right before Lemma~\ref{lemma:fitting_arrangement_of_anti_frame_on_line_Z} that \(T\) cannot have any eigenvectors in \(\mathcal{H}_A\).
\end{proof}

\subsection{Non-trivial invariant measures and spectrum on \(\mathbb{S}^1\)}

Here we give a small
\begin{proof}[Proof of Theorem~\ref{theorem:spectrum_on_circle_minus_1_from_nonfixing_invariant_law}]
    By the topological Poincaré recurrence theorem, cf. \cite[Theorem 3.3]{furstenberg1981recurrence}, we have for \(\varrho\)-almost all \(x \in \mathcal{X}\) that
    \begin{align}\label{equation:topological_poincare_recurrence}
        (T^{N_m} - 1) x 
        \xrightarrow[m \to \infty]{\Vert \cdot\Vert_{\mathcal{X}}}
        0
    \end{align} for some increasing subsequence \((N_m)_{m \in \N}\) of \(\N\).
    We will extract an approximate eigenvalue on the unit circle \(\mathbb{S}^1\) from that. The idea is to first approximate the approximate eigenvalue and look at angles which are rational multiples \(2 \pi \frac{k}{N}\) of \(2 \pi\) only, and compute formally at first
    \begin{align}\label{equation:computation_approx_approx_eigvecs}
        [T - \e^{2 \pi i \frac{k}{N}}]^{-1} \,(T^N - 1)
        = \e^{-2\pi i \frac{k}{N}} \frac{(\e^{-2\pi i \frac{k}{N}} T)^N - 1}{\e^{-2\pi i \frac{k}{N}} T - 1}
        = \e^{-2\pi i \frac{k}{N}} \sum_{n = 0}^{N-1} \e^{-2\pi i n \frac{k}{N}} T^n.
    \end{align}
    In other words, for \(u_{N, k} := \sum_{n = 0}^{N-1} \e^{-2\pi i n \frac{k}{N}} T^n x\), we rigorously get by telescoping the identity
    \begin{align}\label{equation:identity_conversion_approx_fixed_point_approx_eigvalue_on_unit_circle}
        [T - \e^{2 \pi i \frac{k}{N}}] u_{N, k}
        = \e^{2\pi i \frac{k}{N}} (T^N - 1)x.
    \end{align}
    
    In particular, from \eqref{equation:identity_conversion_approx_fixed_point_approx_eigvalue_on_unit_circle} together with \eqref{equation:topological_poincare_recurrence}, we can conclude for any choice of a sequence \((k_m)_{m \in \mathbb{N}}\) in \(\Z\), that
    \begin{align}
        \Vert (T - \e^{i \theta_{m}}) \widetilde{x}_{m} \Vert_{\mathcal{X}}
        \xrightarrow[m \to \infty]{} 0.
    \end{align} where \(\theta_m := 2 \pi \frac{k_m}{N_m}\) and \(\widetilde{x}_m := u_{N_m, k_m}\).
    
    Since \([0, 2\pi]\) is compact, we can w.l.o.g. assume that \(\theta_m \xrightarrow[m \to \infty]{} \theta\) for some \(\theta \in [0, 2\pi)\).
    Normalize \(x_m := \frac{\widetilde{x}_m}{\Vert \widetilde{x}_m\Vert_{\mathcal{X}}}\). It then follows that
    \begin{align}
        \Vert (T - \e^{i \theta}) x_{m} \Vert_{\mathcal{X}}
        \leq \frac{1}{\Vert \widetilde{x}_m \Vert} \Vert (T - \e^{i \theta_m}) \widetilde{x}_{m} \Vert_{\mathcal{X}} + \vert \e^{i \theta} - \e^{i \theta_m} \vert
        \xrightarrow[m \to \infty]{} 0,
    \end{align} if we manage to choose \((k_m)_{m \in N}\) in such a way that 
    \begin{align}\label{equation:lower_bound_norms_approx_approx_eigvecs}
        \inf_{m \in \N} \Vert \widetilde{x}_m \Vert \geq \widetilde{\delta} > 0
    \end{align} In this case, it is \(\e^{i \theta} \in \sigma_{\mathrm{ap}}(T)\).
    
    Furthermore, when \(\varrho(\{x \in \mathcal{X} \,\colon\, Tx = x\}) = 1\), it is already clear from \(\varrho \neq \delta_0\) that \(1\) is an eigenvalue of \(T\), so that \(1 \in \sigma_{\mathrm{ap}}(T) \cap \mathbb{S}^1\). Consequently, we now assume  \(\varrho(\{x \in \mathcal{X} \,\colon\, Tx = x\}) < 1\), w.l.o.g. by conditioning even \(\varrho(\{x \in \mathcal{X} \,\colon\, Tx = x\}) = 0\), and have to additionally show that then \(\e^{i \theta} \neq 1\).

    Let us now start with choosing \(x \in \mathcal{X}\) and \((k_m)_{m \in \N}\) such that \eqref{equation:lower_bound_norms_approx_approx_eigvecs} holds, for which we will have to in some form establish that there has to be some \(k\) such that there cannot be too much cancellation among the summands \(\mathrm{e}^{-2 \pi i n \frac{k}{N}} T^n x\) on the r.h.s. of \eqref{equation:computation_approx_approx_eigvecs}, due to the fact that \(Tx \neq x\) and that \(T\) has some recurrence properties.
    Now, first, select some \(y \in \mathrm{supp}(\varrho)\) such that \(T y \neq y\). The Hahn-Banach theorem provides us a separating linear functional \(\varphi \in \mathcal{X}^{*}\), \(\Vert \varphi \Vert_{\mathcal{X}^*} = 1\), obeying
    \begin{align}
        \delta 
        := \mathrm{Re} [ \varphi(T y - y) ] > 0.
    \end{align}
    If we put \(c := \frac{\mathrm{Re}[\varphi(y)] + \mathrm{Re}[\varphi(Ty)]}{2}\), there is, by boundedness of \(T\), some open neighborhood \(U\) around \(y\) such that
    \begin{align}\label{equation:separation_of_z_Tz_by_linear_functional}
        \mathrm{Re}[\varphi(z)] - c < -\frac{\delta}{4}, \quad \mathrm{Re}[\varphi(Tz)] - c > \frac{\delta}{4}, \quad z \in U.
    \end{align}
    Since \(\varrho(U) > 0\), there is some \emph{frequently recurrent} \(x \in U\), cf. \cite[Lemma 3.1]{GrivauxMartinez2023}, which in particular satisfies
    \begin{align}\label{equation:frequent_recurrence_in_U}
        p := \liminf_{N \to \infty} \frac{1}{N} \vert \{1 \leq n \leq N \,\colon\, T^n x \in U \} \vert > 0.
    \end{align} We fix \(x\) so that simultaneously the topological Poincaré recurrence \cref{equation:topological_poincare_recurrence} holds.

    Let
    \begin{align}
        \widehat{\Phi}_N(k)
        := \sum_{n = 0}^{N-1} \e^{-2 \pi i n \frac{k}{N}}  (\mathrm{Re}[ \varphi(T^n x) ] - c).
    \end{align}
    For \(k \notin N\Z\) it holds that
    \begin{align}
        \widehat{\Phi}_N(k)
        = \sum_{n = 0}^{N-1} \e^{-2 \pi i n \frac{k}{N}}  \, \mathrm{Re}[ \varphi(T^n x) ] 
        = \frac{1}{2}\Big(\varphi(u_{N, k}) + \overline{\varphi(u_{N, -k})} \Big),
    \end{align} so that it would suffice for our purpose of finding \(k = k(N)\) such that \(\Vert u_{N, k} \Vert_{\mathcal{X}} \geq \frac{\delta}{4}\), recalling that \(\Vert\varphi\Vert_{\mathcal{X}*} = 1\), to find such a \(k\) (replacing it if needed by \(-k\)) that \(\vert \widehat{\Phi}_N(k) \vert \geq \frac{\delta}{4}\).
    This can be done by Lemma~\ref{lemma:fourier_lemma}, considering there \(b_n := \mathrm{Re}[ \varphi(T^n x) ] - c\). We can apply the same lemma to ensure that uniformly
    \begin{align}
        \vert \e^{i \theta_m} - 1\vert
        = \vert \e^{2 \pi i \frac{k_m}{N_m}} - 1\vert
        = 2 \sin\Big(\pi \frac{\min_{j \in \Z} \vert k_m - jN_m\vert }{N_m} \Big)
        \geq \sin(\pi p /4) > 0,
    \end{align} because \(b_n = \mathrm{Re}[ \varphi(T^n x) ] - c\) has at least \(\lfloor \frac{p N_m}{2} \rfloor\)-many \(\frac{\delta}{4}\)-upcrossings by the frequent visits \eqref{equation:frequent_recurrence_in_U} of \(T^n\) to \(U\) and separation  \eqref{equation:separation_of_z_Tz_by_linear_functional} of \(z \in U\) and \(Tz\).
\end{proof}

We needed the following lemma which formalizes the intuition that some finite sequence with enough upcrossings cannot be concentrated on low-frequency Fourier modes only.
\begin{lemma}\label{lemma:fourier_lemma}
    Let \(N \geq 2\) and let \(b_0, \dots, b_{N-1} \in \R\) have, for fixed \(\eta > 0\), at least \(r \in \N\) many \(\eta\)-upcrossings
    \begin{align}
        b_{n_\ell} < -\eta, \quad b_{n_\ell +1} > \eta, 
    \end{align} for \(0 \leq n_{1} <\dots < n_r \leq N -2\).
    Define
    \begin{align}
        \widehat{b}(k)
        = \sum_{n = 0}^{N-1} \e^{-2\pi i n \frac{k}{N}} b_n, \quad k \in \Z.
    \end{align}
    Then there is some \(0 \leq k < N\) such that
    \begin{align}\label{equation:higher_fourier_mode_bound}
        \vert \widehat{b}(k) \vert \geq \eta, \quad \min_{j \in \Z} \vert k - jN \vert \geq r.
    \end{align}
\end{lemma}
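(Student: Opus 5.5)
The plan is to argue by contradiction via a low-pass/high-pass splitting: if every Fourier coefficient of high frequency is small, then \(b\) is uniformly close to a real trigonometric polynomial of low degree. Such a polynomial cannot have \(r\) sign changes from negative to positive.

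First I fix the frequency classes. The upcrossing intervals \(\{n_\ell, n_\ell+1\}\) are pairwise disjoint, because \(b_{n+1} > \eta\) rules out \(b_{n+1} < -\eta\). Hence \(2r \leq N\). So the set \(L := \{k \colon \vert k \vert \leq r-1\}\) represents \(2r-1 \leq N-1\) distinct residues mod \(N\). Let \(H\) be the set of the remaining residues in \(\{0,\dots,N-1\}\). These are exactly the \(k\) with \(\min_{j\in\Z}\vert k - jN\vert \geq r\), and \(\vert H\vert = N-2r+1 \leq N-1\).

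Suppose, for contradiction, that \(\vert \widehat{b}(k)\vert < \eta\) for all \(k \in H\). By discrete Fourier inversion, \(b_n = \frac1N\sum_{k=0}^{N-1} \widehat{b}(k)\e^{2\pi i nk/N}\). I split this as \(b_n = p_n + q_n\), where \(q_n := \frac1N\sum_{k\in H}\widehat{b}(k)\e^{2\pi i nk/N}\) and \(p_n := \frac1N \sum_{\vert k\vert\leq r-1}\widehat{b}(k)\e^{2\pi i nk/N}\). Then
\begin{align*}
\vert q_n\vert < \frac{\vert H\vert}{N}\,\eta \leq \eta,
\end{align*}
where the strict inequality is trivially fine if \(H = \emptyset\), since then \(q_n = 0\). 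Since \(b\) is real, \(\widehat{b}(-k) = \overline{\widehat{b}(k)}\). Therefore
\begin{align*}
P(t) := \frac1N\sum_{\vert k\vert\leq r-1}\widehat{b}(k)\e^{ikt}
\end{align*}
is a real-valued trigonometric polynomial of degree at most \(r-1\), and \(P(2\pi n/N) = p_n\).

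At each upcrossing, \(p_{n_\ell} = b_{n_\ell} - q_{n_\ell} < -\eta + \eta = 0\) and \(p_{n_\ell+1} > \eta - \eta = 0\). So on each of the \(r\) pairwise disjoint arcs \([2\pi n_\ell/N, 2\pi(n_\ell+1)/N]\), the function \(P\) changes sign from negative to positive. In particular \(P \not\equiv 0\).

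It remains to use the standard fact that a nonzero real trigonometric polynomial of degree \(\leq r-1\) has at most \(2(r-1)\) zeros on the circle, counted without multiplicity. This follows by writing \(z = \e^{it}\), so that \(z^{r-1}P\) becomes an algebraic polynomial of degree \(\leq 2r-2\). Going once around the circle, the sign changes of \(P\) alternate between \(-\to+\) and \(+\to-\), so their number is even and there are equally many of each kind. Hence \(P\) has at most \(r-1\) upcrossings from negative to positive, which contradicts the \(r\) disjoint upcrossing arcs found above. Thus some \(k \in H\) has \(\vert\widehat{b}(k)\vert \geq \eta\), which is exactly \eqref{equation:higher_fourier_mode_bound}.

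The main care point is this last counting step. The \(r\) disjoint arcs each contain a \(-\to+\) sign change of \(P\), and between two consecutive such changes there must be a \(+\to-\) change. This yields at least \(2r\) distinct zeros, exceeding \(2r-2\). I expect this to be routine once stated carefully, and the rest is bookkeeping of the frequency sets.
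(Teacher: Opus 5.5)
Your proof is correct and follows the paper's argument. Assume every high-frequency coefficient is below \(\eta\), approximate \(b\) to within \(\eta\) by the real low-frequency trigonometric polynomial of degree \(r-1\), and contradict the bound of \(2(r-1)\) zeros using the sign changes forced by the upcrossings. The differences are only in care: you check \(2r \le N\), so that the frequencies \(\vert k\vert \le r-1\) are distinct residues (the paper uses this tacitly), and you count \(2r\) zeros around the whole circle where the paper counts \(2r-1\) in one period.
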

\begin{proof}
    Denote \(\vert k \vert_N = \min_{j \in \Z} \vert k - jN \vert \).
    Suppose there is no \(0 \leq k < N\) such that \eqref{equation:higher_fourier_mode_bound} holds, which implies
    \begin{align}\label{equation:not_much_high_frequency_mass}
        \sum_{k = 0}^{N-1} \vert \widehat{b}(k) \vert \mathbf{1}_{\vert k\vert_N \geq r}
        < \eta N
    \end{align} holds. Then, the low-frequency restriction of the inverse Fourier transform
    \begin{align}
        P(t)
        = \frac{1}{N} \sum_{k = -r+1}^{r-1} \e^{2 \pi i t \frac{k}{N}} \widehat{b}(k), \quad t \in \R,
    \end{align} is a real trigonometric polynomial of degree \(r - 1\) and
    \begin{align}
        \vert P(n) - b_n \vert
        < \eta, \quad 0 \leq n < N.
    \end{align} Hence, the \(r\)-many \(\eta\)-upcrossings imply that \(P\) has to have \((2r-1)\)-many zeros in \([0, N)\). But \(P\), as a non-zero trigonometric polynomial of degree \(r-1\), can have at most \(2 (r-1) < 2r - 1\) zeros. It follows that \eqref{equation:not_much_high_frequency_mass} cannot hold and there is a \(0 \leq k < N\) with \eqref{equation:higher_fourier_mode_bound}.
\end{proof}

\subsection*{Acknowledgments} 
I thank Jonas Köppl for pleasant discussions on a related joint research project.

\subsection*{Disclosure of AI use}
In the context of another research project, I already came up with the core idea of balancing membership of eigenvectors in Hilbert space vs. almost-sure membership of an invariant process in this Hilbert space to understand possible invariant measures. This lead to me conjecturing Theorem~\ref{theorem:example} with this very abstract ansatz.
But, the concrete counterexample and proof were then solely generated by \texttt{ChatGPT 5.6 Sol}, and only clarified and written down more clearly by me.
In the cases I had in mind (just as here), there are still some generalized eigenvectors and this should somehow mirror in the approximate point spectrum as in the other Theorem~\ref{theorem:spectrum_on_circle_minus_1_from_nonfixing_invariant_law} here. Again, apart from the first part of this theorem, the concrete proof idea was solely generated by \texttt{ChatGPT 5.6 Sol}.
I take full responsibility for the correctness of the arguments presented in this note.

\bibliographystyle{alpha}
\bibliography{references}

@article{GrivauxMartinez2023,
    title = {Recurrence properties for linear dynamical systems: An approach via invariant measures},
    journal = {Journal de Mathématiques Pures et Appliquées},
    volume = {169},
    pages = {155-188},
    year = {2023},
    author = {Sophie Grivaux and Antoni López-Martínez},
}

@article{grivaux2021linear,
  title={Linear dynamical systems on {H}ilbert spaces: typical properties and explicit examples},
  author={Grivaux, Sophie and Matheron, {\'E}tienne and Menet, Quentin},
  journal={Memoirs of the American Mathematical Society},
  volume={269},
  number={1315},
  year={2021},
  publisher={American Mathematical Society},
}

@book{furstenberg1981recurrence,
  title     = {Recurrence in Ergodic Theory and Combinatorial Number Theory},
  author    = {Furstenberg, Harry},
  year      = {1981},
  publisher = {Princeton University Press},
  address   = {Princeton, NJ},
}

@article{Flytzanis1995,
  author = {Flytzanis, Elias},
  title = {Unimodular eigenvalues and invariant measures for linear operators},
  journal = {Monatshefte f{\"u}r Mathematik},
  volume = {119},
  number = {4},
  pages = {267--273},
  year = {1995},
  publisher = {Springer},
}

@article{bonilla2022frequently,
  title={Frequently recurrent operators},
  author={Bonilla, A. and Grosse-Erdmann, K.-G. and L{\'o}pez-Mart{\'i}nez, A. and Peris, A.},
  journal={Journal of Functional Analysis},
  volume={283},
  number={12},
  pages={109713},
  year={2022},
  publisher={Elsevier},
}

@article{Costakis2014Recurrent,
  author  = {Costakis, George and Manoussos, Antonios and Parissis, Ioannis},
  title   = {Recurrent Linear Operators},
  journal = {Complex Analysis and Operator Theory},
  volume  = {8},
  pages   = {1601--1643},
  year    = {2014},
}

@article{Shkarin2009,
  author = {Shkarin, Stanislav},
  title = {On the spectrum of frequently hypercyclic operators},
  journal = {Proceedings of the American Mathematical Society},
  volume = {137},
  number = {1},
  pages = {123--134},
  year = {2009},
}

@article{hedlund1971expansive,
  author    = {Hedlund, James H.},
  title     = {Expansive Automorphisms of {B}anach Spaces, {II}},
  journal   = {Pacific Journal of Mathematics},
  volume    = {36},
  number    = {3},
  pages     = {671--675},
  year      = {1971},
  publisher = {Mathematical Sciences Publishers},
}

\end{document}